\documentclass[10pt]{article}

\usepackage{amsfonts,graphicx,amsmath,amssymb,amsthm,geometry,bbm,hyperref,nicefrac,enumerate,comment,fontawesome,mathtools,cases,algorithmic,algorithm2e}

\usepackage{xcolor}

\DeclareMathOperator*{\argmin}{arg\,min}

\newcommand{\R}{\mathbb{R}}
\newcommand{\N}{\mathbb{N}}
\newcommand{\E}{\mathbb{E}}

\renewcommand{\P}{\mathbb{P}}

\newcommand{\F}{\mathcal{F}}

\newcommand{\e}{\text{e}}
\renewcommand{\d}{\text{d}}

\usepackage{array}
\newcolumntype{P}[1]{>{\centering\arraybackslash}p{#1}}
\newcommand{\triple}{{\vert\kern-0.25ex\vert\kern-0.25ex\vert}}

\usepackage{geometry}
 \geometry{
 a4paper,
 total={170mm,257mm},
 left=20mm,
 top=15mm
 }

\newtheorem{lemma}{Lemma}[section]

\newtheorem{assumption}[lemma]{Assumption}
\newtheorem{theorem}{Theorem}[section]
\newtheorem{corollary}{Corollary}[section]

\newtheorem{proposition}[lemma]{Proposition}

\theoremstyle{definition}

\newcommand{\footremember}[2]{%
    \footnote{#2}
    \newcounter{#1}
    \setcounter{#1}{\value{footnote}}%
}
\newcommand{\footrecall}[1]{%
    \footnotemark[\value{#1}]%
} 

\setcounter{secnumdepth}{5}
\setcounter{tocdepth}{6}

\begin{document}

\title{Convergence of a robust deep FBSDE method for stochastic control}

\author{%
  Kristoffer Andersson\footremember{alley}{Research Group of Scientific Computing, Centrum Wiskunde \& Informatica} \and
  Adam Andersson\footremember{alleys}{Research Group of Computational Mathematics, Chalmers University of Technology \& the University of Gothenburg }\footremember{alleys2}{Saab AB Radar Solutions, Gothenburg, Sweden}
  \and Cornelis W. Oosterlee\footrecall{alley} \footremember{trailer}{Mathematical Institute, Utrecht University} 
 }

\maketitle
\begin{abstract}
In this paper, we propose a deep learning based numerical scheme for strongly coupled FBSDEs, stemming from stochastic control. It is a modification of the deep BSDE method in which the initial value to the backward equation is not a free parameter, and with a new loss function being the weighted sum of the cost of the control problem, and a variance term which coincides with the mean squared error in the terminal condition. We show by a numerical example that a direct extension of the classical deep BSDE method to FBSDEs, fails for a simple linear-quadratic control problem, and motivate why the new method works. Under regularity and boundedness assumptions on the exact controls of time continuous and time discrete control problems, we provide an error analysis for our method. We show empirically that the method converges for three different problems, one being the one that failed for a direct extension of the deep BSDE method.

\end{abstract}
\tableofcontents

\section{Introduction}
Forward backward stochastic differential equations (FBSDEs) constitute an important family of models with many applications in a wide variety of fields such as finance, physics, chemistry and engineering. As the name suggests, an FBSDE consists of two stochastic differential equations (SDE), one forward SDE and one backward SDE, commonly referred to as the forward equation and the backward equation, respectively. The forward equation is a classical SDE with a given initial value, while the backward equation has a given stochastic terminal value and the initial value is part of the solution. In this paper, we are concerned with stochastic control problems, which typically lead to coupled FBSDEs, meaning that the primary stochastic variables in the backward SDE impact the forward SDE and vice versa. Since closed form solutions to FBSDEs are rare, one often has to rely on numerical approximations. 

In this paper, we propose a method which falls into the rapidly growing category of neural network based approximation schemes for FBSDEs and PDEs. Even though there are a number of works in this direction, the methods stem from the pioneering work \cite{han2018solving}, and are similar in spirit. In the present paper, non-convergence of the method from \cite{han2018solving}, originally proposed for non-coupled FBSDEs, is identified, when applied to strongly coupled FBSDEs. We present a new family of methods and prove analytically and numerically that it overcomes the convergence problem.

Prior to the recent surge in machine learning based algorithms, the task of approximating FBSDEs has been an active field of research for several decades.
From a general perspective, approximation schemes can be categorized into \emph{backward} and \emph{forward} numerical methods, referring to the order in time in which the methods operate.

A backward numerical method usually relies on an initialization of the backward equation at the known terminal value (or an approximation of the terminal value). The solution is then approximated recursively, backwards in time, by approximating conditional expectations. There are several different methods to approximate these conditional expectations such as \textit{e.g.,} tree based methods, see \cite{teng2018review,ma2002numberical}, Fourier based methods, see \textit{e.g.,} \cite{ruijter2015fourier,huijskens2016efficient,ruijter2016numerical} and least-squares Monte Carlo (LSMC) methods, see \textit{e.g.,} \cite{bouchard2004discrete,chau2019stochastic,fahim2011probabilistic,bender2012least,crisan2010monte,gobet2016stratified}. A general property of backward methods is that the terminal condition of the backward equation depends on the realization of the forward equation. This is not a problem for \emph{decoupled} FBSDEs, but for coupled FBSDEs the method becomes implicit and iterative schemes which may not always converge need to be employed. A second class of algorithms, which is more suitable for coupled FBSDEs is the class of forward methods such as PDE based methods see \textit{e.g.,} \cite{ma1994solving,gobet2007error} and Picard linearization schemes, see \textit{e.g.,} \cite{gobet2010solving,hutzenthaler2019multilevel,bender2007forward}. For a summary on forward and backward numerical methods for FBSDEs, we refer to \cite{chessari2021numerical}. A typical drawback for the methods mentioned above is that they suffer from the curse of dimensionality (some methods such as LSMC and Picard schemes may overcome this problem to some extent), meaning that the time complexity and the memory requirements increase exponentially with the dimensions of the problem.

In addition to the classical methods described above, a new branch of algorithms based on neural networks has appeared in recent years. In \cite{han2018solving}, the authors present a method called the \emph{Deep BSDE method}, which relies on a neural network parametrization of the control process and the initial condition of the backward equation. Both the forward and the backward equations are then treated as forward equations and are approximated with the Euler--Maruyama scheme. To achieve an accurate approximation, the parameters of the neural networks are optimized such that the terminal condition on the backward equation is (approximately) satisfied in mean squared sense. The method has proven to be able to approximate a wide class of equations in very high dimensions (at least 100). Since the original Deep BSDE method publication, several papers with adjustments of the algorithm as in \textit{e.g.,} \cite{beck2021solving,beck2019machine,beck2021deep,fujii2019asymptotic,raissi2018forward,ji2020three,gnoatto2020deep,ji2021control, wang2022deep,henry2017deep}  and others with convergence analysis in \textit{e.g.,} \cite{han2020convergence,hutzenthaler2020proof, berner2020analysis,elbrachter2021dnn,Grohs2018APT,jentzen2018proof,jiang2021convergence}, have been published. In addition to being forward methods, these algorithms are \emph{global} in their approximation, meaning that the optimization of all involved neural networks is carried out simultaneously. This implies that they are optimized subject to one single objective function, also called \emph{loss function}, as it is usually referred to in the machine learning literature. There also exists a branch of neural network based algorithms relying on \emph{local} optimization techniques. Typically, these methods are of backward type and of similar nature as the LSMC algorithms, but instead of polynomials as their basis functions, neural networks are used. As for the LSMC method, these kinds of algorithms are not easily applied to coupled FBSDEs. Algorithms of this type can be found in \textit{e.g.,} \cite{hure2020deep,hure2019some,chan2019machine} and with error analysis \cite{fang2009novel,Balint}. For an overview of machine learning algorithms for approximation of PDEs, we refer to \cite{beck2020overview}.

As mentioned, we are interested in coupled FBSDEs and the focus is therefore on global algorithms operating forward in time, with a structure similar to the Deep BSDE method. We demonstrate that the approach taken in \textit{e.g.,} \cite{andersson2019approximate,pereira2019learning,liu2021deep,ji2020three,dai2021learning}, where the deep BSDE method is applied to the FBSDEs associated with the stochastic control problem, is problematic. As we show in the present paper, even though an accurate approximation of the control problem can be achieved, this does not imply an approximation of the FBSDE in general. Moreover, it is observed that the deep BSDE method does not converge for certain problems and the convergence problem is prominent for the strongly coupled FBSDEs. Our proposed method overcomes this problem by employing the equivalence between the stochastic control problem and the FBSDE. To be more precise, we use the fact that the initial value of the BSDE coincides with the value function of the control problem and hence, can be expressed as a minimization problem. Moreover, we use the adaptivity property of the BSDE to conclude that a stochastic version of the value function is $\F_0-$measurable and therefore has zero variance. These two properties are then combined in the loss function to achieve a robust approximation scheme for coupled FBSDEs. The effectiveness of our algorithm is demonstrated  empirically on a collection of problems with different characteristics. In addition, a theoretical error analysis is carried out, in which we provide convergence rates for the initial and terminal conditions of the FBSDE under a mild assumption and strong convergence of the FBSDE under stronger assumptions. Our main result is similar to the aposteriori error bound for the deep BSDE method which was established for weakly coupled FBSDEs in \cite{han2020convergence} and later extended to non-Lipschitz coefficients (but for less general diffusion coefficients) in \cite{jiang2021convergence}. However, these results are unlikely to be valid for strongly coupled FBSDEs and we find several examples in which the discrete terminal condition converges while the FBSDE approximation does not.

This paper is structured as follows: In Section~\ref{sec2}, we present the stochastic control problem and explain the reformulations to a PDE as well as a FBSDE. Moreover, we introduce reformulations of the FBSDE to different variational problems which are used for the algorithms in later sections. The section concludes by introducing the time discrete counterparts of the reformulations as well as a discussion on when and why the deep BSDE method fails to converge. In Section~\ref{sec3}, the fully implementable algorithms are presented together with details on the neural networks used. Section~\ref{sec4} is devoted to error analysis of the proposed algorithm. 
Classical Euler--Maruyama type discretization errors and errors stemming from differences between time discrete and time continuous stochastic control are discussed. Finally, in Section~\ref{sec5} numerical approximations are compared with their analytic counterparts when we have such available. 

\section{The deep FBSDE method and an improved family of methods}\label{sec2}

This section contains a formal introduction to our proposed method with motivation from stochastic control and FBSDE theory. Section~\ref{sec:FBSDE} introduces the stochastic control problem, the related Hamilton-Jacobi-Bellman equation and FBSDE. Alternative formulations of the FBSDE are presented in Section~\ref{sec:alt_form}. In Section~\ref{sec:deepBSDE}, we motivate by numerical examples why a direct extension of the deep BSDE method to FBSDEs, as in \cite{andersson2019approximate,pereira2019learning,liu2021deep}, fails for many problems. Finally, in Section~\ref{sec:deepFBSDE}, the proposed robust deep FBSDE method is described. In this section, we present formally the method for the sake of clarity while in Section \ref{sec4}, a more rigorous approach is taken.

\subsection{Stochastic control and FBSDEs}\label{sec:FBSDE}

Our starting point is a controlled SDE and its associated cost functional
\begin{numcases}{}\label{SDEu}
    X_t^u = x_0 + \int_0^t \bar{b}(s,X_s^u,u_s)\d s + \int_0^t\sigma(s,X_s^u)\d W_s,\\
\label{J}
        J^u(t,x)=\E\bigg[\int_t^T \bar{f}(s,X_s^u,u_s)\d s + g(X_T^u)\,\Big|\,X_t^u=x\bigg],\quad t\in[0,T].
\end{numcases}Here $T\in(0,\infty)$, $d,k,\ell\in\N$, $(W_t)_{t\in[0,T]}$ is a $k$-dimensional standard Brownian motion, the coefficients $\bar{b}\colon[0,T]\times \R^d\times\R^\ell\to\R^d$, $\sigma\colon[0,T]\times \R^d\times\R^\ell\to\R^{d\times k}$, $\bar{f}\colon[0,T]\times \R^d\times\R^\ell\to\R$ and $g\colon\R^d\to\R$ satisfy some extra regularity conditions, and the control process, $u=(u_t)_{t\in[0,T]}$, belongs to a set $\mathcal U$ of admissible controls, taking values in a set $U\subset \R^\ell$. The aim is to find a control process, $u^*\in\mathcal U$, that minimizes $J^u(0,x_0)$. 

Assuming the cost to be bounded from below, the \emph{value function} of the control problem is given by \begin{equation}
    \label{value_fun}
    V(t,x)=\inf_{u\in\mathcal{U}}J^u(t,x).
\end{equation}
For the presentation, we assume uniqueness of the infimum. Under appropriate conditions, the value function satisfies a \emph{Hamilton--Jacobi--Bellman} (HJB) equation, which is a non-linear parabolic PDE given by \begin{equation}\label{HJB}\begin{dcases}
    \frac{\partial V}{\partial t}(t,x) + \frac{1}{2}\text{Tr}(\sigma\sigma^\top\mathrm{D}_x^2V)(t,x)+ \mathcal{H}(t,x,\text{D}_xV(t,x))=0, & (t,x)\in [0,T)\times \R^d,\\
    V(t,x)=g(x), & (t,x)\in\{T\}\times \R^d.
    \end{dcases}
\end{equation}
Here $\mathrm{Tr}$ denotes the trace of a matrix and for $(t,x)\in [0,T]\times \R^d$, $p\in\R^d$ the \emph{Hamiltonian}, $\mathcal{H}$, is given by \begin{equation}\label{Hamiltonian}\mathcal{H}(t,x,p)=\inf_{v\in U}\big[\bar{b}(t,x,v)^\top p+\bar{f}(t,x,v)\big].\end{equation} 
Under conditions that guarantee a sufficiently regular solution to \eqref{HJB}, and the infimum in the Hamiltonian to be attained at $v^*=v^*(t,x,p)$, the optimal control is of the feedback form  $u^*(t,X_t)=v^*\big(t,X_t,\text{D}_xV(t,X_t)\big)$, where we have written $X\coloneqq X^{u^*}$ for the optimally controlled $X^u$. The feedback map $v^*$ is for many interesting problems easy to derive. Again, under sufficient regularity, It\^o's formula applied to $V(t,X_t)$ yields
\begin{equation}\label{FBSDE}\begin{dcases}
X_t=x_0+\int_0^t b(s,X_s,Z_s)\d s + \int_0^t\sigma(s,X_s)\d W_s,\\
Y_t=g(X_T) + \int_t^T f(s,X_s,Z_s)\d s -\int_t^T \langle Z_s, \d W_s\rangle,\quad t\in[0,T],
    \end{dcases}
\end{equation}
where $Y_t=V(t,X_t)$, $Z_t=\sigma^T(t,X_t)\mathrm{D}_xV(t,X_t)$ and, for $\theta\in\{b,f\}$, we have
\begin{align*}
    \theta(t,X_t,Z_t)\coloneqq \bar{\theta}(t,X_t,v^*(t,X_t, (\sigma(t,X_t)\sigma^T(t,X_t))^{-1}\sigma(t,X_t)Z_t)).
\end{align*}
In the rest of this section, we assume the existence of a unique solution $(X,Y,Z)$ of \eqref{FBSDE} in appropriate spaces. Given $Z$, or equivalently $\mathrm{D}_xV$, we thus have an optimal control $u_t^*=v^*(t,X_t, (\sigma(t,X_t)\sigma^T(t,X_t))^{-1}\sigma(t,X_t)Z_t)$. This would make efficient numerical FBSDEs schemes very useful for solving the control problem. In the other direction, if we have an optimal control $u^*$, then in general this does not give us $Z$, unless $p\mapsto v^*(t,x,p)$ is invertible, and only in this case the control problem naturally suggests numerical schemes for FBSDEs. Below, we introduce a family of numerical schemes for FBSDEs that works regardless of invertibility of the feedback map, but reduces to the control problem in the case of invertibility.

\subsection{Alternative formulations of FBSDEs}\label{sec:alt_form}
The Deep BSDE method proposed in \cite{han2018solving}, relies on a reformulation of the FBSDE \eqref{FBSDE} into two forward SDEs, one with apriori unknown initial value.  It relies moreover on the Markov property of the FBSDE, which guarantees that $Z_t=\zeta^*(t,X_t)$, for some function $\zeta^*\colon[0,T]\times\R^d\to\R^k$, that we refer to as Markov map, and optimization is done with respect to such functions and initial values $y_0$. More precisely, the FBSDE \eqref{FBSDE} is reformulated into the following variational problem
\begin{equation}\label{var0_FBSDE}\begin{dcases}
\underset{y_0,\zeta}{\mathrm{minimize}}\ 
 \E|Y_T^{y_0,\zeta}-g(X_T^{y_0,\zeta})|^2,\quad \text{where}\\
X_t^{y_0,\zeta}=x_0+\int_0^tb(s,X_s^{y_0,\zeta},Z^{y_0,\zeta}_s)\d s + \int_0^t\sigma(s,X_s^{y_0,\zeta})\d W_s,\\
Y_t^{y_0,\zeta}= y_0-\int_0^tf(s,X_s^{y_0,\zeta},Z^{y_0,\zeta}_s)\d s +\int_0^t \langle Z^{y_0,\zeta}_s,\d W_s\rangle,\\
Z^{y_0,\zeta}_t=\zeta(t,X_t^{y_0,\zeta}),\quad t\in[0,T],
    \end{dcases}
\end{equation}
where $y_0$ and $\zeta$ are sought in appropriate spaces. From the theory outlined in Section~\ref{sec:FBSDE}, under well-posedness and sufficient regularity of \eqref{FBSDE}, it holds that $Y_0=V(0,x_0)$ and $\zeta^*=\sigma^\top\mathrm{D}_x V$, and thus we have well-posedness of \eqref{var0_FBSDE}. While it seems natural to propose a numerical algorithm based on a discrete version of \eqref{var0_FBSDE}, we demonstrate below that such an optimization problem, even for many simple problems, does not converge.  

In order to introduce numerical schemes that do not suffer under the above problem, we use the following two properties of the initial value $Y_0$ of \eqref{FBSDE}:
\begin{enumerate}[(i)]
    \item $Y_0$ coincides with the value function of the control problem (property from the control problem);
    \item $Y_0$ is $\F_0-$measurable and therefore has zero variance (property from the FBSDE).
\end{enumerate}
The two properties are both captured in the following variational problem:
\begin{equation}\label{var_FBSDE}\begin{dcases}
\underset{\zeta}{\mathrm{minimize}}\ 
\Phi_\lambda(\zeta)=\E[\mathcal{Y}_0^\zeta] + \lambda\text{Var}[\mathcal{Y}_0^\zeta],\quad\mathrm{where}\\
\mathcal{Y}_0^\zeta=g(X_T^\zeta)+\int_0^Tf(t,X_t^\zeta,Z_t^\zeta)\d t-\int_0^T\langle Z^\zeta_t,\d W_t\rangle,\\
X_t^\zeta=x_0+\int_0^tb(s,X_s^\zeta,Z^\zeta_s)\d s + \int_0^t\sigma(s,X_s^\zeta)\d W_s,\\
Y_t^\zeta= \E[\mathcal{Y}_0^\zeta]-\int_0^tf(s,X_s^\zeta,Z^\zeta_s)\d s +\int_0^t \langle Z^\zeta_s,\d W_s\rangle,\\
Z_t^\zeta = \zeta(t,X_t^\zeta),\quad t\in[0,T].
    \end{dcases}
\end{equation}
We refer to $\mathcal{Y}_0^\zeta$ as the \emph{stochastic cost} and notice that $\E[\mathcal{Y}_0^\zeta]=J^{u(\zeta)}(0,x_0)$, where $u(\zeta)\in\mathcal U$ is the control generated by $\zeta$. Thus, the first term of the objective function $\Phi_\lambda$ is the cost function of the control problem. In the case of $p\mapsto v^*(t,x,p)$ being invertible, this term alone, i.e., for $\lambda=0$, offers an equivalent formulation to \eqref{var0_FBSDE}. In other cases, uniqueness of minimizers of $\zeta\mapsto \E[\mathcal{Y}_0^\zeta]$ cannot be guaranteed, but among the minimizers, there is only one $\zeta^*$ with the property that the variance of the stochastic cost $\mathcal{Y}_0^\zeta$ equals zero. The second term of $\Phi_\lambda$ is introduced to penalize non-zero variance and the minimizer for $\lambda>0$ is unique. Another important feature of the formulation \eqref{var_FBSDE}, is that $Y_0^\zeta$ is determined by $\zeta$ alone and \eqref{var_FBSDE} has thus one degree of freedom less than \eqref{var0_FBSDE}. A final observation is that
\begin{align}\label{eq:Var_identity}
    \text{Var}[\mathcal{Y}_0^\zeta]
    =\E\big[\big|\E[\mathcal{Y}_0^\zeta]-\mathcal{Y}_0^\zeta\big|^2\big]
    &=\E\Bigg[\bigg|Y_0^\zeta-\int_0^tf(s,X_s^\zeta,Z_s^\zeta)\d s +\int_0^t Z_s^\zeta\d W_s-g(X_T^\zeta)\bigg|^2\Bigg]    
    =
    \E\big[|Y_T^\zeta-g(X_T^\zeta)|^2\big].
\end{align}
This implies that the second term of $\Phi$ is, up to the factor $\lambda$, the same as that of \eqref{var0_FBSDE}, but with $Y_0^\zeta$ not being a variable to optimize. Thus, there are strong similarities between \eqref{var0_FBSDE} and \eqref{var_FBSDE}, but in the time discrete setting the latter formulation is shown to be advantageous sections below.

\subsection{A direct extension of the deep BSDE method and why it fails}\label{sec:deepBSDE}
In this section, we present the time discrete counterpart of \eqref{var0_FBSDE}. We assume an equidistant time grid, $0=t_0<t_1<\ldots<t_N=T$, with $h=t_{n+1}-t_n$ and denote the Brownian increment $\Delta W_n=W_{n+1}-W_n$. Throughout the paper, we parameterize discretizations by $h\in(0,1)$ and by this we mean all $h\in(0,1)\cap\{T/N:N\geq1\}$.

The time discrete version of \eqref{var0_FBSDE} is given by
\begin{equation}\label{var0_discrete_FBSDE}\begin{dcases}
\underset{y_0,\zeta}{\mathrm{minimize }}\;\;
 \E
 \Big[
   \big|
     Y_N^{h,y_0,\zeta}-g(X_N^{h,y_0,\zeta})
   \big|^2
 \Big],\quad \text{where}\\
X_n^{h,y_0,\zeta}=x_0+\sum_{k=0}^{n-1}b\big(t_k,X_k^{h,y_0,\zeta},Z_k^{h,y_0,\zeta}\big)h + \sum_{k=0}^{n-1}\sigma(t_k,X_k^{h,y_0,\zeta})\Delta W_k,\\
Y_n^{h,y_0,\zeta}= y_0-\sum_{k=0}^{n-1}f\big(t_k,X_k^{h,y_0,\zeta},Z_k^{h,y_0,\zeta}\big)h +\sum_{k=0}^{n-1} \big\langle Z_k^{h,y_0,\zeta},\Delta W_k\big\rangle,\\
Z_k^{h,y_0,\zeta}=\zeta_k(X_k^{h,y_0,\zeta}).
    \end{dcases}
\end{equation}

It is a direct extension of the deep BSDE method from \cite{han2018solving}. In the literature, it was first applied experimentally to FBSDEs in the master thesis \cite{andersson2019approximate} and thereafter in \cite{pereira2019learning}, both for inverted pendulums, in \cite{liu2021deep} for an application to attitude control of unmanned aerial vehicles. More examples of implementations of the deep FBSDE method are found in \cite{ji2020three,dai2021learning}.

In \cite{han2020convergence}, the authors consider FBSDEs with coefficients $b,\sigma$ and $f$ that may take the $Y$-component, but not the $Z$-component, as arguments. Under the relatively strict assumption of weak coupling (also called monotonicity condition, see \textit{e.g.,} \cite{antonelli1993backward}), it is shown that for $h$ small enough there is a constant $C$, independent of $h$, such that
\begin{equation}\label{eq:HanLong}
    \sup_{t\in[0,T]}
    \Big(
      \E\Big[
        \big|
          X_t-\hat{X}_t^{h,y_0,\zeta}
        \big|^2
      \Big]
        + 
        \E
        \Big[
          \big|
            Y_t-\hat{Y}_t^{h,y_0,\zeta}
          \big|^2
        \Big]
    \Big) 
    + 
    \int_0^T
    \E
    \Big[
      \big|
        Z_t-\hat{Z}_t^{h,y_0,\zeta}
      \big|^2
    \Big]\d t
    \leq C
    \Big(h + 
    \E
    \Big[\big|
      Y_N^{h,y_0,\zeta}-g(X_N^{h,y_0,\zeta})
    \big|^2\Big]\Big),
\end{equation}
where for $t\in[t_k,t_{k+1})$, $\hat{X}_t^{h,y_0,\zeta}\coloneqq X_k^{h,y_0,\zeta}$, $\hat{Y}_t^{h,y_0,\zeta}\coloneqq Y_k^{h,y_0,\zeta}$ and $\hat{Z}_t^{h,y_0,\zeta}=\zeta_k(X_k^{h,y_0,\zeta})$. Under some additional assumptions on the coefficients $b,f,\sigma$ and $g$ (additional smoothness and boundedness of the coefficients to guarantee a bounded and smooth solution of the associated HJB equation), the results in \cite{han2020convergence} can be extended to the framework of interest in this paper, \textit{i.e.,} coefficients taking the $Z$-component as an argument. On the other hand, the weak coupling condition is rarely satisfied for FBSDEs stemming from stochastic control problems, and to the best of our knowledge, there is no known way to relax this condition.  

To investigate convergence of \eqref{var0_discrete_FBSDE} empirically, we first note that if we would know $Y_0$  apriori, then the variational problem \eqref{var0_FBSDE} would be reduced to finding $\zeta$. We also know that $\mathrm{D}_xV^\top\sigma$ is the minimizer, which would make the objective function identical to zero. In the discrete counterpart, we would expect that, if \eqref{var0_discrete_FBSDE} converges to \eqref{var0_FBSDE}, then for sufficiently small $h$, the objective function would be close to zero if optimizing only $\zeta$ and setting $y_0=Y_0$. Moreover, for a robust algorithm to emerge from \eqref{var0_discrete_FBSDE}, it is important that $y_0\neq Y_0$ results in a larger value of the optimal objective function, at least when $y_0$ and the true initial value, $Y_0$, are ''far away'' from each other. To formalize this, we introduce the mean squared error
\begin{equation}\label{var_empirical_investigation}
  \mathrm{MSE}(y_0)
  \coloneqq
  \underset{\zeta}{\inf}\ 
  \E
  \big[
    \big|
      Y_N^{h,y_0,\zeta}-g(X_N^{h,y_0,\zeta})
    \big|^2
  \Big].
\end{equation}
The aim is to investigate whether or not $\text{MSE}$ is minimized at, or close to, the true initial condition $Y_0$. Moreover, for each $y_0$, we want to investigate the Markov map $\zeta^{y_0}$ that minimizes 
$ \zeta\mapsto 
  \E[|
      Y_N^{h,y_0,\zeta}-g(X_N^{h,y_0,\zeta})
    |^2]
$. The discrete costs, associated with $(y_0,\zeta^{y_0})$ and $(y_0,\zeta)$, are given by
\begin{equation*}\label{cost_disc}
    J^{h,y_0}_0
    =
    J^{h,y_0,\zeta^{y_0}}_0
    \quad\textrm{and}\quad
    J^{h,y_0,\zeta}_0
    =
    \E
    \big[
      \mathcal{Y}^{h,y_0,\zeta}_0
    \big].
\end{equation*}
Here, the \emph{discrete stochastic cost} is given by
\begin{align}\label{eq:stoch_cost}
  \mathcal{Y}^{h,y_0,\zeta}_0
  =
  g\big(X_N^{h,y_0,\zeta}\big)+\sum_{k=0}^{N-1}f\big(t_k,X_k^{h,y_0,\zeta},Z_k^{h,y_0,\zeta}\big)h -\sum_{k=0}^{N-1} \big\langle Z_k^{h,y_0,\zeta},\Delta W_k\big\rangle.
\end{align}
Using the stochastic cost, we have by a substitution that
\begin{equation}\label{var_empirical_investigation2}
  \mathrm{MSE}(y_0)
  =
  \underset{\zeta}{\inf}\ 
  \E
  \big[
    \big|
    \mathcal{Y}_0^{h,y_0,\zeta}
    - y_0
    \big|^2
  \big].
\end{equation}

Figure~\ref{MSE} shows $y_0\mapsto\mathrm{MSE}(y_0)$ and $y_0\mapsto J_0^{h,y_0}$ for two different Linear-Quadratic (LQ) control problems respectively, a one-dimensional and a two-dimensional problem. The left figure shows that there is a minimum of $\mathrm{MSE}$ at the correct $Y_0$ and for this problem the method converges. For the right figure, it is clear that there is no minimum of $\mathrm{MSE}$ around $Y_0$, or anywhere in the range. When $y_0$ and $\zeta$ are jointly optimized, the method has no chance to converge for this problem. 

Both problems considered in this section are of the form \eqref{FBSDE_LQG} with parameters as in Section~\ref{del} for the two-dimensional problem and $A=B=C=R_x=R_u=G=1$ and $\sigma=0.5$ for the one-dimensional problem. 

\begin{figure}[htp]
\centering
\begin{tabular}{cc}
          \includegraphics[width=80mm]{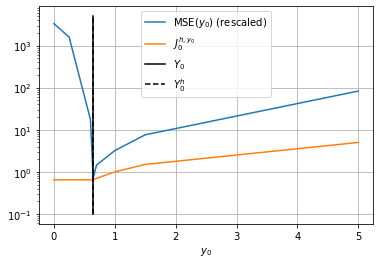}&    
          \includegraphics[width=80mm]{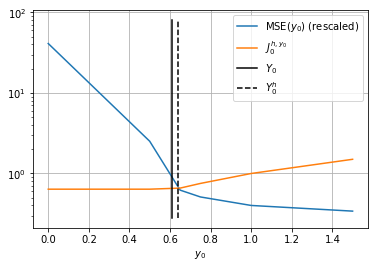}
\end{tabular}
\caption{Demonstration of the performance of the direct extension of the deep BSDE method to FBSDEs corresponding to two LQ control problems. \textbf{Left:} A one-dimensional problem with $N=10$ time steps. \textbf{Right:} A two-dimensional problem with $N=100$ time steps.}\label{MSE}
\end{figure}

We identify three distinct cases from Figure~\ref{MSE}:
\begin{itemize}
    \item 
    $y_0\approx Y_0$: In this case, $\mathrm{MSE}$ is the mean squared error of the discretized FBSDE and it is most reasonable that an approximation of $(X,Y,Z)$ is obtained by optimizing over $\zeta$.
    \item
    $y_0>Y_0$: Under this assumption, any $\zeta$ attaining the infimum in \eqref{var_empirical_investigation} satisfies $J^{h,y_0}_0=y_0$. Thus minimizing \eqref{var_empirical_investigation} is the same as finding the $\zeta$ that generates discrete cost $y_0$ and that at the same time minimizes the mean squared error in the terminal condition. A $\zeta$ with cost $y_0>Y_0$ has the possibility to generate a lower $\mathrm{MSE}(y_0)<\mathrm{MSE}(Y_0)$, depending on $y_0$ and the problem at hand. This is only possible if the coupling of $Z$ in $b$ is strong enough, so that $g(X_T)$ can be efficiently controlled by $Z$. In the case with no coupling, i.e., for a BSDE, $y_0>Y_0$ leads to an $\mathrm{MSE}$ of  the magnitude $y_0-Y_0$, and thus $\mathrm{MSE}$ is increasing in this regime. This is the reason why non-coupled BSDEs, as in \cite{han2018solving}, or weakly coupled FBSDEs, as in \cite{han2020convergence}, can be approximated with the deep BSDE method. The left plot of Figure~\ref{MSE} shows this favorable behaviour, while the right plot has a decreasing $\mathrm{MSE}$ and has no chance to converge. We also see that the cost increases linearly for $y_0>Y_0$ according to $J^{h,y_0}_0=y_0$. 
    \item
    $y_0<Y_0$: Since $Y_0$ is (approximately) a lower bound of $(y_0,\zeta)\mapsto J^{h,y_0,\zeta}$, it holds that any $\zeta$ attaining the infimum in $\zeta\mapsto  \E
  [|Y_N^{h,y_0,\zeta}-g(X_N^{h,y_0,\zeta})|^2]$ also minimizes the cost functional $\zeta\mapsto J^{h,y_0,\zeta}_0$. But $y_0$ does not enter $\mathcal{Y}^{h,y_0,\zeta}_0$ explicitly. Therefore, the minimizer of $\zeta\mapsto\E[\mathcal{Y}^{h,y_0,\zeta}_0]$ does not depend on $y_0$. Thus, fixing $y_0<Y_0$ and optimizing $\zeta$ approximates a solution to the control problem but not to the FBSDE. This can clearly be seen in Figure~\ref{MSE} from the cost being constant for $y_0<Y_0$ in both plots. It is also clear that the $\mathrm{MSE}$ increases for decreasing $y_0<Y_0$.
\end{itemize}
To further visualize the three cases, Figure~\ref{Fy0} shows the empirical means and
90\% credible regions (defined as the area between the $5:$th and the $95:$th empirical percentiles at each time point) for the true and approximated $Y$ and $Z$ processes of the two-dimensional LQ control problem discussed above. In the top row, we see that, in the case $y_0\approx Y_0$, the two components of the $Z$-process are very well approximated, but the time discretization error of $Y$ is visible. In the middle row, for $y_0>Y_0$, we see what is expected based on the discussion above. The $Y$ process satisfies the terminal value but is otherwise fundamentally distinct from the true $Y$, and $Z$ is different and oscillating (it is specified to have cost $y_0$). In the bottom row, the case $y_0<Y_0$ is shown. Just as explained above, the control problem is solved and therefore the $Z$ process is well approximated. It should though be noted that this is only true since the map $p\mapsto v^*(t,x,p)$, for this specific problem, is invertible. Otherwise, one optimal $\zeta$, in a set of many optimal Markov maps, is approximated. Thus, the control problem is approximately solved, however, the $Z$-component of the FBSDE is unlikely to be accurate. The $Y$ process is shifted by $y_0-Y_0$ and the terminal value is naturally not satisfied.

\begin{figure}[htp]
\centering
\begin{tabular}{ccc}
\includegraphics[width=50mm]{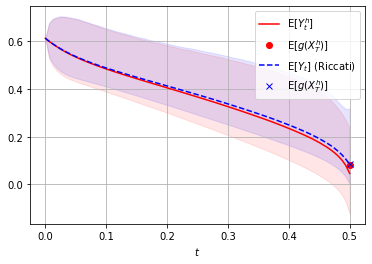}&
          \includegraphics[width=50mm]{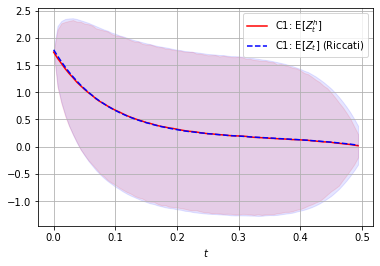}&
          \includegraphics[width=50mm]{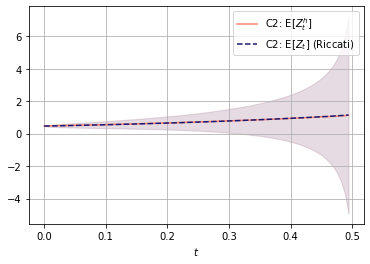}\\
\includegraphics[width=50mm]{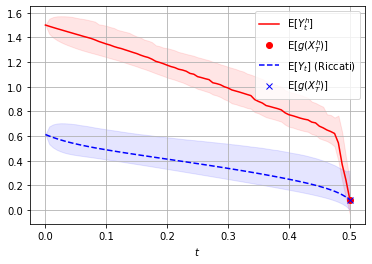}&
          \includegraphics[width=50mm]{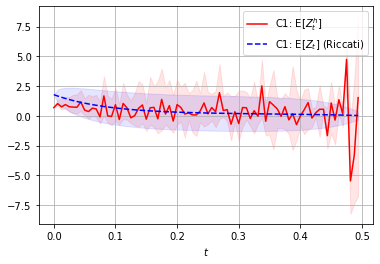}&
          \includegraphics[width=50mm]{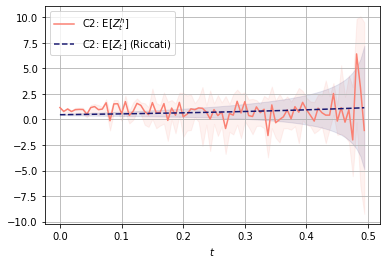}\\
\includegraphics[width=50mm]{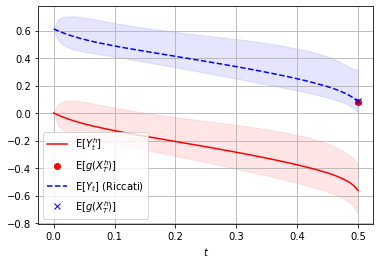}&
          \includegraphics[width=50mm]{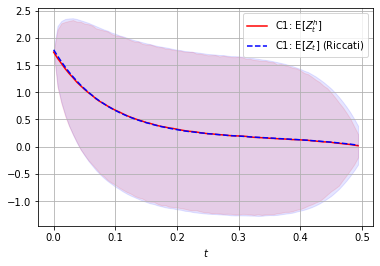}&
          \includegraphics[width=50mm]{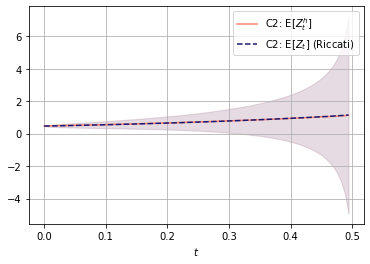}
\end{tabular}
\caption{Demonstration of the performance of the Deep FBSDE solver with fixed initial condition for $N=100$ time steps. The shaded areas represent the domain in which $90\%$ of all trajectories lie (the area is bounded by the 5:th and the 95:th empirical percentiles). \textbf{Left} to \textbf{right}: Sample means of the approximate and the semi-analytic (Riccati solutions) $Y-$process and the first and and second components of the $Z-$process (C1 and C2). \textbf{Top} to \textbf{bottom}: Initial conditions $\hat{y}_0=0.612\approx Y_0$,  $\hat{y}_0=1.5>Y_0$ and $\hat{y}_0=0.0<Y_0$.}
\label{Fy0}
\end{figure}
When searching for interesting example problems, we learned that it is much easier to find problems that do not converge than finding problems that do converge.

\subsection{A robust deep FBSDE method}\label{sec:deepFBSDE}
Having observed the problems with the direct extension of the deep BSDE method to FBSDEs, we here discretize the alternative formulation \eqref{var_FBSDE} of the FBSDE to obtain an alternative family of deep FBSDE methods. It reads:
\begin{equation}\label{var_discrete_FBSDE}\begin{dcases}
\underset{\zeta}{\mathrm{minimize}}\ 
 \Phi_{\lambda,h}(\zeta)
 =
 \E[\mathcal{Y}_0^{h,\zeta}]+\lambda\text{Var}[\mathcal{Y}_0^{h,\zeta}],\quad \text{where},\\
 \mathcal{Y}_0^{h,\zeta}:=g(X_N^{h,\zeta})+\sum_{k=0}^{N-1}f(t_k,X_k^{h,\zeta},Z_k^{h,\zeta})h 
-\sum_{k=0}^{N-1}\big\langle Z_k^{h,\zeta},\Delta W_k\big\rangle,\\
X_n^{h,\zeta}=x_0+\sum_{k=0}^{n-1}b(t_k,X_k^{h,\zeta},Z_k^{h,\zeta})h + \sum_{k=0}^{n-1}\sigma(t_k,X_k^{h,\zeta})\Delta W_k,\\
Y^{h,\zeta}_n=\E[\mathcal{Y}^{h,\zeta}_0]-\sum_{k=0}^{n-1}f(t_k,X^{h,\zeta}_k,Z_k^{h,\zeta})h +\sum_{k=0}^{n-1} \big\langle Z_k^{h,\zeta},\Delta W_k\big\rangle,\\ Z_k^{h,\zeta}=\zeta_k(X_k^{h,\zeta}).
    \end{dcases}
\end{equation}
The main purpose of the current paper is the theoretical and numerical error analysis of \eqref{var_discrete_FBSDE}. 

\subsection{Related methods and comparison} 
In a comparison with the current literature we focus on methods for solving stochastic control problem, or the associate FBSDE, by deep learning in a global way, in the sense that only one global optimization problem is solved. 

In the early paper \cite{han2016deep}, time discrete stochastic optimal control problems were solved with deep learning. No explicit connections to FBSDEs, or even to stochastic control in continuous time, were made. The feedback maps for the controls at all time steps were optimized over a family of neural networks, to minimize the discrete cost functional. This methodology is similar to our method when $\lambda=0$. It only differs in its approximation of the feedback map for $u$ instead of the Markov map for $Z$. The connection between our proposed method and the deep BSDE method, proposed in \cite{han2018solving}, is the second term in the loss function of \eqref{var_discrete_FBSDE}, corresponding to $\lambda\to\infty$. From \eqref{eq:Var_identity} it becomes clear that, if the driver does not take $Y$ as an input, then this term coincides with the loss function used in the deep BSDE method.

The recent paper \cite{wang2022deep} is, to the best of our knowledge, the first to introduce the variance penalty term in \eqref{var_discrete_FBSDE} for an FBSDE obtained from the dynamic programming principle. The authors of \cite{yu2020backward} also use the variance penalty term, but for general decoupled FBSDEs, \textit{i.e.,} not in the context of stochastic control. In \cite{wang2022deep}, the problem is approached differently in that they have one network for the Markov map for $Z$ and one for the feedback map for the control. In \cite{wang2022deep}, the variance term in the loss function is presented as a measurability loss. Their motivation is that if the BSDE is solved, then the initial state of the $Y-$process is $\F_0-$measurable and hence the variance is zero. Although, one should bear in mind that this is only true for the continuous BSDE. In the discretized version, the $Y-$process is not measurable and no arguments for convergence of the discretization are presented in \cite{wang2022deep}. However, their numerical results are convincing and an error analysis similar to the one presented in Section \ref{sec4} in the present paper could possibly be carried out also in their setting.

Another method which also makes use of the connection to stochastic control (of Hamiltonian systems) was proposed in \cite{ji2021control}. In that paper, the stochastic maximum principle approach to stochastic control was used, which results in a different type of FBSDEs, compared to the one obtained with the dynamic programming principle, that we consider in this paper. More precisely, it is the $Y-$process instead of the $Z-$process which is connected to the control of the SDE. A similarity is that in both papers the algorithms use a method similar to that in \cite{han2016deep}, to include the cost in the objective function. 

Summarizing, only the method in \cite{wang2022deep} is fully comparable to ours, as it is designed to solve the same problem. It has the variance term but not the mean in its loss function. By introducing a loss function that includes both we are able to prove convergence and obtain a robust method.

\subsection{Decoupled FBSDEs and why coupled FBSDEs are important}
It is sometimes claimed in the literature that since coupled FBSDEs can be transformed into decoupled BSDE, it is sufficient to have schemes for the latter, see \textit{e.g.,} \cite{han2020convergence}. Here, we explain this claim and why we, from a practical and application viewpoint, do not agree. If $\psi\colon[0,T]\times\R^d\times\R^\ell\to\R^d$ is sufficiently regular, then it holds by the It\^o formula for $Y_t^\psi=V(t,X_t^\psi)$ and $Z_t^\psi=\sigma^T(t,X_t^\psi)\text{D}_xV(t,X_t^\psi)$ that
\begin{equation*}\begin{dcases}
  X_t^\psi
  =
  x_0+\int_0^t \big(b(s,X_s^\psi,Z_s^\psi)-\psi(s,X_s^\psi,Z_s^\psi)\big)\d s + \int_0^t\sigma(s,X_s^\psi)\d W_s,\\
  Y_t^\psi
  =
  g(X_T^\psi) 
  + 
  \int_t^T 
    \big(f(s,X_s^\psi,Z_s^\psi)-\big\langle (\sigma(s,X_s^\psi)\sigma^T(s,X_s^\psi))^{-1}\sigma(s,X_s^\psi)
    Z_s,\psi(s,X_s^\psi,Z_s^\psi)\big\rangle\big)
  \d s 
 \\\quad\quad\quad -
  \int_t^T \langle Z_s^\psi, \d W_s\rangle,\quad t\in[0,T].
    \end{dcases}
\end{equation*}
Thus taking $\psi$ in such a way that $b-\psi$ does not depend on $Z$, decouples the FBSDE resulting in a BSDE, where the forward equation has no coupling with the backward equation. From this observation, it might be tempting to approximate the optimal Markov map $\sigma^T(t,x)\text{D}_xV(t,x)$ with the deep BSDE method. The problem with this approach is that the deep BSDE method will learn $\sigma^T(t,x)\text{D}_xV(t,x)$ well only around typical trajectories of $X^\psi$, but not around those of $X$. While there is empirical evidence that the deep BSDE method overcomes the curse of dimensionality, it does not at all approximate the solution everywhere, but only around the typical solution trajectories of the forward equation. Since $X$ is controlled, it has a different dynamics than $X^\psi$ and this may ruin the applicability of the deep BSDE method for control problems, if the feedback map is desired. If only an approximation of the solution to the HJB equation (the value) is sought, as in \cite{han2018solving}, then decoupling is feasible.

\section{Fully implementable scheme and neural network regression}\label{sec3}
In this section, we describe how the discrete variational problem \eqref{var_discrete_FBSDE} is approximated with the help of neural network regression. In principle, other function approximators could be used, but neural network regression is arguably one of the most suitable choices due to the ability to approximate complicated high-dimensional functions. Although neural networks have shown empirically high quality results in many different fields, there are still many open convergence questions related to the optimization procedure of the loss function. On the other hand, the Universal Approximation Theorem (UAT) \cite{cybenko1989approximations} guarantees that under certain conditions, there is a neural network, sufficiently deep and wide, such that it is possible to approximate a large class of continuous functions to any, pre-specified degree of accuracy.

\subsection{Fully implementable algorithms}\label{sec_algo}
Without further specifications, \eqref{var_discrete_FBSDE} assumes exact optimization over an unspecified set of functions $\zeta$ and the exact computation of expectations. To define a fully implementable scheme, the Markov maps $\zeta_0,\dots,\zeta_{N-1}$ are approximated with neural networks $\phi_0^{\theta_0},\dots,\phi_{N-1}^{\theta_{N-1}}$ with parameters $\theta=(\theta_0,\dots,\theta_{N-1})$ in some parameter space $\Theta$. We specify them in further detail below. Moreover, expectations are approximated with batch Monte-Carlo simulation. Let $K_{\mathrm{epochs}}\geq1,K_{\mathrm{batch}}\geq1$ be the number of epochs and the number of batches per epoch, respectively. Let further $M_{\mathrm{train}},M_\text{batch}\geq1$ be the size of the training data set and batch, respectively. We assume that $M_{\mathrm{train}}/(2M_\text{batch})=K_{\mathrm{batch}}\in\N$. Training data are $M_{\mathrm{train}}$ independent realizations of the Wiener increments $\Delta W_0,\ldots,\Delta W_{N-1}\sim\mathcal{N}(0,h)$ and the training data are reused in $K_{\mathrm{epoch}}$ epochs. The training is initialized by random sampling of $\theta^0\in\Theta$. For each update step in an epoch of the training algorithm, we take $2M_\text{batch}$ Wiener increments $\Delta W_0(m),\ldots,\Delta W_{N-1}(m)$, $m=1,2,\dots,2M_\text{batch}$ from the training data set that were not previously used during the epoch and update $\theta$ by approximate optimization of the following problem:
\begin{equation}\label{var_discrete_FBSDE_implementable}
\begin{dcases}
\underset{\theta\in\Theta}{\mathrm{minimize}}\ 
 \mathcal{L}_{\lambda,h}(\theta)
 =
 \frac{1}{M_\text{batch}}\Bigg(\sum_{m=1}^{M_\text{batch}}\mathcal{Y}_0^{h,\theta}(m)+\lambda\cdot\sum_{m=M_\text{batch}+1}^{2M_\text{batch}}|g(X_N^{h,\theta}(m))-Y_N^{h,\theta}(m)|^2\Bigg),\quad \text{where},\\
 \mathcal{Y}_0^{h,\theta}(m):=g(X_N^{h,\theta}(m))+\sum_{k=0}^{N-1}f(t_k,X_k^{h,\theta}(m),Z_k^{h,\theta}(m))h 
-\sum_{k=0}^{N-1}\big\langle Z_k^{h,\theta}(m),\Delta W_k(m)\big\rangle,\\
X_n^{h,\theta}(m)=x_0+\sum_{k=0}^{n-1}b(t_k,X_k^{h,\theta}(m),Z_k^{h,\theta}(m))h + \sum_{k=0}^{n-1}\sigma(t_k,X_k^{h,\theta}(m))\Delta W_k(m),\\
Y^{h,\theta}_n(m)=
\frac{1}{M_\text{batch}}\sum_{r=1}^{M_\text{batch}}\mathcal{Y}_0^{h,\theta}(r)
-\sum_{k=0}^{n-1}f(t_k,X^{h,\theta}_k(m),Z_k^{h,\theta}(m))h +\sum_{k=0}^{n-1} \big\langle Z_k^{h,\theta}(m),\Delta W_k(m)\big\rangle,\\
Z_k^{h,\theta}(m)=\phi_k^{\theta_k}(X_k^{h,\theta}(m)).
\end{dcases}
\end{equation}
When all training data has been used, a new epoch starts. When the $K_{\mathrm{epoch}}$:th epoch is finished, the algorithm terminates. The neural network parameters at termination are $\theta^*$. It is an approximation of the parameters $\theta^{**}$ that optimize \eqref{var_discrete_FBSDE_implementable} in the limit $M_\text{batch}\to\infty$. To complement \eqref{var_discrete_FBSDE_implementable}, Algorithm~\ref{alg:osm} details the training procedure. 

It should be noted that the expected value of the stochastic sum in $\mathcal{Y}_0^{h,\theta}$ equals zero. Therefore, the algorithm would also work without it, but a practical reason to keep it is that it decreases the variance of $\mathcal{Y}_0^{h,\theta}$ significantly, and this requires fewer Monte-Carlo samples to achieve the same accuracy. 

\begin{algorithm}[htp]\label{algorithm}
	\KwIn{Initialization of neural network parameters, $\{\theta_0(1),\ldots,\theta_{N-1}(1)\}$, and, for $0\leq k\leq2M_\text{train}$ and $0\leq n\leq N-1$, Wiener increments $\Delta W_n(k)$.}
	\KwOut{Approximation of the Markov map for $(Z_t)_{t\in[0,T]}$ at the time discrete mesh points.}
     \For{$k=1,2,\ldots,K_\mathrm{batch}$ \emph{(}$K_\mathrm{batch}=M_\mathrm{train}/(2M_\mathrm{batch})$ \emph{is the number of batches.)}  \emph{(should be carried out sequentially)}}{
    \For{ $1\leq m\leq M_\mathrm{batch}$ \emph{(may be carried out in parallel)}}{
    Set $X_0^{h,\theta}(m)=x_0$
	
	\For{$n=0, \ldots, N-1$ \emph{(should be done sequentially)}}{
	    $Z_{n}^{h,\theta}(m)=\phi_n\big(X_{n}^{h,\theta}(m)\,|\,\theta_n(k)\big)$\\
		$X_{n+1}^{h,\theta}(m)=X_{n}^{h,\theta}(m) + b\big(t_n,X_{n}^{h,\theta}(m),Z_{n}^{h,\theta}(m)\big)h+ \sigma(t_n,X_{n}^{h,\theta}(m))\Delta W_n(m)$
	}}
	\vspace{0.2cm}

\For{ $m\in\{M_\mathrm{batch}+1,\ldots,2M_\mathrm{batch}\}$ \emph{(may be carried out in parallel)}}{
	\vspace{0.2cm}
	Set $X_0^{h,\theta}(m)=x_0$ and $Y_0^{h,\theta}(m)=\frac{\displaystyle 1}{\displaystyle M_\mathrm{batch}}\displaystyle \sum_{m=M_\mathrm{batch}+1}^{2M_\mathrm{batch}} g(X_N^{h,\theta}(m))+ \sum_{n=0}^{N-1}f\big(t_n,X_{n}^{h,\theta}(m),Z_{n}^{h,\theta}(m)\big)h- \sum_{n=0}^{N-1}\big\langle Z_n^{h,\theta}(m),\Delta W_n(m)\big\rangle$ \\[1.5ex]
	\For{$n=0, \ldots, N-1$ \emph{(should be carried out sequentially)}}{
	$Z_{n}^{h,\theta}(m)=\phi_n\big(X_{n}^{h,\theta}(m)\,|\,\theta_n(k)\big)$\\
	$X_{n+1}^{h,\theta}(m)=X_{n}^{h,\theta}(m) + b\big(t_n,X_{n}^{h,\theta}(m),Z_{n}^{h,\theta}(m)\big)h+ \sigma(t_n,X_{n}^{h,\theta}(m))\Delta W_n(m)$\\
    $Y_{n+1}^{h,\theta}(m)=Y_{n}^{h,\theta}(m) - f\big(t_n,X_{n}^{h,\theta}(m),Z_{n}^{h,\theta}(m)\big)h+ \big\langle Z_{n}^{h,\theta}(m),\Delta W_n(m)\big\rangle$
	}}    \vspace{0.2cm}
	$\theta=\{\theta_0,\theta_1,\ldots,\theta_{N-1}\}$ \text{(trainable parameters)}\\    \vspace{0.2cm}
	$\mathcal{L}(\theta)=\frac{1}{M_\text{batch}}\bigg(\sum_{m=1}^{M_\mathrm{batch}}Y_0^\theta(m)+\lambda\sum_{m=M_\mathrm{batch}+1}^{2M_\mathrm{batch}}|g(X_N^{h,\theta}(m))-Y_N^{h,\theta}(m)|^2$\bigg) (Loss-function)\\
    \vspace{0.2cm}
	$\theta(k+1)\leftarrow\argmin_{\theta}\mathcal{L}(\Theta)$ (some optimization algorithm, usually of gradient decent type)
}
\caption{Pseudo-code of one epoch of the neural network training}
\label{alg:osm}
\end{algorithm}

\subsection{Specification of the neural networks}\label{NN_spec}
Here, we introduce the neural networks that we use in our implementations in Section~\ref{sec5}. The generality is kept to a minimum and more general architectures are of course possible. For each $\phi_k^{\theta_k}\colon\R^d\to\R^\ell$, a fully connected neural network with two hidden layers with 20 nodes in each layer and a ReLU activation function $\mathfrak{R}(x)=\max(0,x)$ acting elementwise is employed. More precisely, the $\phi_k^\theta$ is of the form
\begin{align*}
    \phi_k^{\theta_k}(x)
    =
    W_k^3\mathfrak{R}(W_k^2\mathfrak{R}(W_k^1x+b_k^1)+b_k^2)+b_k^3,
\end{align*}
with weight matrices 
$W_k^1\in\R^{20\times d}$,  $W_k^2\in\R^{20\times 20}$, $W_k^3\in\R^{\ell\times 20}$
and bias vectors
$b_k^1,b_k^2\in\R^{\mathfrak{N}}$,
 $b_k^3\in\R^\ell$,
and $\theta_k=(W_k^1,W_k^2,W_k^3,b_k^1,b_k^2,b_k^3)$, where the matrices are considered vectorized before concatenation. 

\section{Convergence analysis}\label{sec4}
In this section, we primarily provide an error analysis for \eqref{var_discrete_FBSDE}, i.e., for the semidiscretization in time. In Section~\ref{sec:notation}, we introduce notation and spaces, and, in Section~\ref{sec:setting}, we present the setting and some further notation. Two technical results on strong and weak convergence  are stated and proved in Section~\ref{sec:aux}. These two results are used in Section~\ref{sec:Y0} to prove the error in the objective function, in the initial and terminal value of $Y$ and for the variance of the stochastic cost. The results hold under an assumption on the regularity of the exact continuous and discrete Markov maps. Convergence of the latter to the former is not assumed.
Section~\ref{sec:strong} contains strong convergence of the FBSDE under either the stronger assumption of small time $T$ or convergence of the discrete Markov maps. A discussion about a full error analysis for the fully implementable scheme \eqref{var_discrete_FBSDE_implementable} is presented in Section~\ref{sec:full}.

\subsection{Notation and spaces}\label{sec:notation}
For Euclidean spaces $\R^k$, $k\geq1$, we denote by $\|\cdot\|$ the 2-norm without specifying the dimension. Let $\mathcal{S}^2(\R^k)$ and $\mathcal{H}^2(\R^k)$ be the spaces of all progressively measurable stochastic processes $y,z\colon[0,T]\times\Omega\to\R^k$, for which
\begin{align*}
    \|y\|_{\mathcal{S}^2(\R^k)}
    =
    \sup_{t\in[0,T]}
    \Big(
    \E
    \big[
      \|y_t\|^2
    \big]
    \Big)^\frac12
    <
    \infty,
    \quad
    \textrm{and}
    \quad
    \|z\|_{\mathcal{H}^2(\R^k)}
    =
    \Bigg(
    \E
    \bigg[
      \int_0^T
        \|z_t\|^2
      \d t
    \bigg]
    \Bigg)^\frac12<\infty,
\end{align*}
respectively. For a discretization, $0=t_0<t_1<\dots<t_N=T$ with $t_{n+1}-t_n=h$, for all $n$, the space $\mathcal{S}_h(\R^k)$ is the space of all $\mathcal{F}^h$-adapted, square integrable and discrete stochastic processes $y:\{0,1,\dots,N\}\times\Omega\to\R^d$, where $\mathcal{F}^h_n=\sigma(\Delta W_m,\ m=0,\dots,n-1)$. For $k_1,k_2,k_3,k_4\geq1$,  $\ell_1,\ell_2,\ell_3\geq0$ and regular $S_i\subseteq\R^{k_i}$, $i=1,2,3,4$, by $C_{\mathrm b}^{\ell_1,\ell_2,\ell_3}(S_1\times S_2\times S_3;S_4)$, we denote the space of all functions $\phi\colon S_1\times S_2\times S_3\to S_4$, whose derivatives up to orders $\ell_1,\ell_2,\ell_3$ exist, are continuous and bounded. We equip it with the semi-norms 
\begin{align*}
    |\phi|_{\gamma}
    =
    \sup_{x\in S_1\times S_2\times S_3}
    \|\partial^\gamma\phi(x)\|,
    \quad
    i\in\{1,\dots,\ell\},
\end{align*}
where $\gamma=\{(i_1,i_2,i_3): i_j\in\{0,,\dots,\ell_j\},\ j=1,2,3\}$ are multi-indices and $\partial^{\gamma}=\partial_1^{i_1}\partial_2^{i_2}\partial_3^{i_3}$ with $\partial_j^i$ denoting $i$:th partial derivative in variable $j$. The set $B(\ell_1,\ell_2,\ell_3)$ denotes all multi-indices of length 3 that have exactly one non-zero index. We only use multi-indices in $B(\ell_1,\ell_2,\ell_3)$ and do therefore not impose restrictions on the cross derivatives, as is otherwise common. For functions with fewer than three variables, we reduce the number of indices accordingly and in the semi-norms we write  $|\phi|_{i_1,i_2,i_3}=|\phi|_{(i_1,i_2,i_3)}$. For $\alpha\in(0,1]$, $k_1,k_2,k_3\geq1$, $\ell_1,\ell_2\geq0$ and regular $S_i\subseteq \R^{k_i}$, $i=1,2,3$, by $C_{\mathrm{H}, \mathrm b}^{\alpha,\ell_1,\ell_2}([0,T]\times S_1\times S_2;S_3)$, we denote the space of all functions $\phi\colon [0,T]\times S_1\times S_2\to S_3$ that are $\alpha$-H\"older continuous in time and whose derivatives of order $\ell_1,\ell_2$ exist, are continuous and bounded and satisfy the property 
\begin{align*}
    \triple\phi\triple_{\alpha,\ell_1,\ell_2}
    &=
    \sup_{t\in[0,T]}
    \|\phi(t,0,0)\|
    +
    \sum_{\gamma \in B}
      \sup_{t\in[0,T]}|\phi(t,\cdot,\cdot)|_\gamma\\
    &\quad
    +
    \sup_{(x_1,x_2)\in S_1\times S_2}
    \sup_{t_1,t_2\in[0,T],t_1\neq t_2}\frac{\mathbf{1}_{(0,1]}(\alpha)\|\phi(t_1,x_1,x_2)-\phi(t_2,x_1,x_2)\|}
    {(1+\|x_1\|+\|x_2\|)|t_2-t_1|^\alpha}
    <\infty.
\end{align*}
Again, when there is only one space variable, we reduce the number of indices. When there is no risk of confusion, we write $|\cdot|_{\alpha,0,0}$ to denote the second term defining the $\triple\cdot\triple_{\alpha,\ell_1,\ell_2}$-norm, and let $|\cdot|_{0,i,0}$ and $|\cdot|_{0,0,i}$ coincide with the semi-norms on $C_{\mathrm b}^{\alpha,\ell_1,\ell_2}([0,T]\times S_1\times S_2;S_3)$ with the same notation. For $\alpha=0$, we let
$C_{\mathrm{H}, \mathrm b}^{0,\ell_1,\ell_2}([0,T]\times S_1\times S_2;S_3)=C_{\mathrm b}^{0,\ell_1,\ell_2}([0,T]\times S_1\times S_2;S_3)$. 

For any function or process $R$ defined on $[0,T]$, we denote by $\check{R}$ the discrete time function or process defined by $\check{R}_n=R_{t_n}$, $n=0,\dots,N$. For any discrete function or process $R_h$ defined on ${0,\dots,N}$, we write $\hat R_h$ for the continuous time function or process defined by the piecewise constant interpolation  $\hat{R}_{h,t}=R_{h,n}$ for $t\in[t_n,t_{n+1})$ and $\hat{R}_{h,T}=R_{h,N}$.

\subsection{Setting and spaces of Markov maps}
\label{sec:setting}
Let $(W_t)_{t\in[0,T]}$ be a $k$-dimensional Brownian motion on a filtered probability space $(\Omega,\mathcal{F},(\mathcal{F}_t)_{t\in[0,T]},\P)$, and $(\alpha,\beta)\in[0,\frac12]\times\{1\}$ or $(\alpha,\beta)=(1,2)$. The coefficients $ b\colon[0,T]\times\R^d\times \R^k\to\R^d $, $\sigma\colon[0,T]\times\R^d\to\R^{d\times k}$,  $f\colon[0,T]\times\R^d\times \R^k\to\R $ and $g\colon\R^d\to\R$ satisfy 
$b\in C_{\mathrm{ H,b}}^{\alpha,\beta,\beta}([0,T]\times\R^d\times \R^k;\R^d)$, $\sigma\in C_{\mathrm{ H,b}}^{\alpha,\beta}([0,T]\times\R^d;\R^{d\times k})$, $f\in C_{\mathrm{ H,b}}^{\alpha,\beta,\beta}([0,T]\times\R^d\times \R^k;\R)$,
$g\in C^1_{\mathrm b}(\R^d;\R)$. In the case $\alpha=1$, we assume that $\text{D}_x \sigma=0$. 

We next introduce families of Markov maps. Let $\mathcal{Z}=\mathcal{Z}(b,\sigma,f,g)$ be the collection of all measurable functions $\zeta:[0,T]\times\R^d\to\R^k$ with the property that the stochastic processes\\ $(X^\zeta,Y^\zeta,Z^\zeta)_{\zeta\in\mathcal{Z}}\subset\mathcal{S}^2(\R^d)\times\mathcal{S}^2(\R)\times\mathcal{H}^2(\R^d)$ satisfying for all $t\in[0,T]$, $\P$-almost surely \eqref{var_FBSDE}, are well defined. We write $\mathcal{Z}^{\alpha,\beta}=\mathcal{Z}\cap C_{\mathrm{H,b}}^{\alpha,\beta}([0,T]\times\R^d;\R^k)$. For the discrete equations, we introduce for every $h\in(0,1)$ analogously  $\mathcal{Z}_h=\mathcal{Z}_h(b,\sigma,f,g)$ to be the collection of all measurable functions $\zeta\colon\{0,\dots,N_h-1\}\times\R^d\to\R^k$, with the property that $(X^{h,\zeta},Y^{h,\zeta},Z^{h,\zeta})_{\zeta\in\mathcal{Z}_h}\subset\mathcal{S}_h^2(\R^d)\times\mathcal{S}_h^2(\R)\times\mathcal{S}_h^2(\R)$ satisfying for all $n\in\{0,\dots,N_h\}$, $\P$-almost surely \eqref{var_discrete_FBSDE} are well-defined. We write $\mathcal{Z}_h^\beta=\mathcal{Z}_h\cap (C_{\mathrm{b}}^\beta(\R^d;\R^k))^{N+1}$. 

By introducing assumptions on the Markov maps we eliminate the need for assuming smoothness, Lipschitz, polynomial growth, monotonicity, coercivity or other conditions on the coefficients, for the existence and uniqueness of solutions for \eqref{var_FBSDE} and \eqref{var_discrete_FBSDE}. For unfortunate choices of $b,\sigma,f,g$, the spaces $\mathcal Z$ and $\mathcal{Z}_h$ might be empty and results hold by default, but given regular $b,\sigma,f,g$ it is not hard, using available solution theory, to find $\zeta\in\mathcal Z$ and $\zeta_h\in\mathcal{Z}_h$. Still, the entire spaces might be hard to represent but this is not of central importance. In Assumption~\ref{as:Y0} below, the regularities of the optimal $\zeta^*$ and $\zeta_h^*$, solving \eqref{var_FBSDE} and \eqref{var_discrete_FBSDE} are though of importance. Thus classical solution theory for SDE, FBSDE and regularity theory for optimal Markov maps in discrete and continuous time are required for verifying our assumptions for concrete examples. The latter is not well developed, see the discussion prior to Assumption~\ref{as:Y0} below. Thus part of our assumptions require further theoretical development to be verified, but our results show what is required.

\subsection{Auxiliary lemmata on strong and weak convergence for SDEs}\label{sec:aux}

In the proof of our convergence results in Subsection~\ref{sec:Y0}, we rely on the  strong convergence result, stated next. It contains both classical strong convergence of the Euler-Maruyama scheme for H\"older continuous coefficients, including strong order 1 for additive noise, but also a non-standard type of strong convergence result for processes that have drift coefficients that for each step size, agree between the grid points, but whose coefficients do not necessarily converge as the step size tends to zero.
\begin{lemma}\label{lemma_strong_conv}
Suppose the setting of Subsection~\ref{sec:setting} holds. Let $a\in C_{\mathrm{H,b}}^{\alpha,\beta}([0,T]\times\R^d;\R^d)$ and $a_h\colon[0,T]\times\R^d\to\R^d$, $h\in(0,1)$ be a family of functions that are constant on each interval $[t_n,t_{n+1})$, satisfy $a_h([0,T],\cdot)\subset C^\beta(\R^d;\R^d)$ and $\sup_{h\in(0,1)}\triple a_h\triple_{0,\beta}<\infty$, let $\mathcal{X},\mathcal{X}^{1,h}\in\mathcal{S}^2(\R^d)$, $h\in(0,1)$ be the unique solutions to
\begin{align*}
    \emph{d} \mathcal{X}_t
    &=
    a(t,\mathcal{X}_t)\emph{d} t
    +
    \sigma(t,\mathcal{X}_t)\emph{d} W_t,\ t\in(0,T];\quad \mathcal{X}_0 = x_0,\\
    \emph{d} \mathcal{X}_t^{1,h}
    &=
    a_h(t,\mathcal{X}_t^{1,h})\emph{d} t
    +
    \sigma(t,\mathcal{X}_t^{1,h})\emph{d} W_t,\ t\in(0,T];\quad \mathcal{X}_0^{1,h} = x_0,\quad h\in(0,1),
\end{align*}
and $\mathcal{X}^{2,h},\mathcal{X}^{3,h}\in\mathcal{S}_h^2(\R^d)$, $h\in(0,1)$ be the unique solutions to
\begin{align*}
    \mathcal{X}_{n+1}^{2,h}
    &=
    \mathcal{X}_{n}^{2,h}
    +
    a(t_n,\mathcal{X}_n^{2,h})h
    +
    \sigma(t_n,\mathcal{X}_n^{2,h})\Delta W_n,\ n\in\{0,\dots,N-1\};\quad \mathcal{X}_0^{2,h} = x_0,\quad h\in(0,1),\\
    \mathcal{X}_{n+1}^{3,h}
    &=
    \mathcal{X}_{n}^{3,h}
    +
    a_h(t_n,\mathcal{X}_n^{3,h})h
    +
    \sigma(t_n,\mathcal{X}_n^{3,h})\Delta W_n,\ n\in\{0,\dots,N-1\};\quad \mathcal{X}_0^{3,h} = x_0,\quad h\in(0,1).
\end{align*}
It holds that 
\begin{align}\label{eq:stability}
  \sup_{h\in(0,1)}
  \|\mathcal{X}^{1,h}\|_{\mathcal{S}^2(\R^d)}
  +
    \sup_{h\in(0,1)}
  \|\hat{\mathcal{X}}^{2,h}\|_{\mathcal{S}^2(\R^d)}
  +
  \sup_{h\in(0,1)}
  \|\hat{\mathcal{X}}^{3,h}\|_{\mathcal{S}^2(\R^d)}
  <\infty,
\end{align}
and there exists a constant $C$, such that
\begin{equation*}
    \lim_{h\to0}
    h^{-\alpha}
    \Big(
    \big\|
      \mathcal{X}-\hat{\mathcal{X}}^{2,h}\big
    \|_{\mathcal{S}^2(\R^d)}
    +
    \big\|
      \mathcal{X}^{1,h}-\hat{\mathcal{X}}^{3,h}
    \big\|_{\mathcal{S}^2(\R^d)}
    \Big)
    =
    \begin{cases}
      0\quad&\textrm{if } \alpha=0,\\
      C&\textrm{otherwise}.
    \end{cases}
\end{equation*}
\end{lemma}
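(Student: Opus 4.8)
The statement has three parts: (a) the uniform moment bound \eqref{eq:stability}; (b) the rate-$\alpha$ convergence $\mathcal{X}-\hat{\mathcal{X}}^{2,h}$, which is the classical Euler--Maruyama result for H\"older-in-time, Lipschitz-in-space coefficients (with additive noise when $\alpha=1$, i.e.\ $D_x\sigma=0$); and (c) the ``non-standard'' rate-$\alpha$ convergence $\mathcal{X}^{1,h}-\hat{\mathcal{X}}^{3,h}$, where the drift $a_h$ is \emph{the same} function on both sides and does not converge as $h\to0$, so what is being estimated is purely the time-discretization error of the Euler scheme applied to a fixed (though $h$-dependent) SDE, uniformly in $h$.

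First I would dispatch (a). For $\mathcal{X}^{1,h}$ this is the standard a priori $L^2$-bound for SDEs with linearly growing coefficients, using that $\sup_h\triple a_h\triple_{0,\beta}<\infty$ and $\sigma\in C_{\mathrm{H,b}}^{\alpha,\beta}$ imply a linear-growth constant uniform in $h$; apply It\^o to $\|\mathcal{X}^{1,h}_t\|^2$, take sup, use Burkholder--Davis--Gundy on the martingale term, and close with Gr\"onwall, all constants depending only on the uniform bounds. For the discrete processes $\mathcal{X}^{2,h},\mathcal{X}^{3,h}$ the analogous discrete Gr\"onwall argument applies to $\E[\|\mathcal{X}^{j,h}_n\|^2]$; the piecewise-constant interpolation $\hat{\mathcal{X}}^{j,h}$ then inherits the bound since between grid points it only adds a drift increment of size $O(h)$ and a Brownian increment with second moment $O(h)$. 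This part is routine.

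Next, part (b): I would write $e_t = \mathcal{X}_t - \hat{\mathcal{X}}^{2,h}_t$ and the usual error SDE, splitting the drift difference as $a(t,\mathcal{X}_t)-a(t_n,\mathcal{X}^{2,h}_n) = [a(t,\mathcal{X}_t)-a(t,\hat{\mathcal{X}}^{2,h}_t)] + [a(t,\hat{\mathcal{X}}^{2,h}_t)-a(t_n,\hat{\mathcal{X}}^{2,h}_t)]$ and similarly for $\sigma$; the first bracket is controlled by $|a|_{0,1,0}\|e_t\|$, the second is the consistency error, bounded using $\alpha$-H\"older continuity in time by $C(1+\|\hat{\mathcal{X}}^{2,h}_t\|)h^\alpha$ in time plus the one-step displacement $\mathcal{X}_t-\mathcal{X}_{t_n}$, whose $L^2$-norm is $O(h^{1/2})$ in general — but under the setting either $\alpha\le1/2$ (so $h^{1/2}$ is absorbed into $h^\alpha$) or $\alpha=1$ with $D_x\sigma=0$, in which case the $\sigma$-consistency term drops and one gains the extra half order for the drift term via a second expansion / the standard additive-noise trick. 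Take squares, sup over $[0,T]$, BDG, square-moment bounds from (a), and Gr\"onwall to get $\|e\|_{\mathcal{S}^2}\le C h^\alpha$. The limit statement with the distinction $\alpha=0$ versus otherwise follows since for $\alpha=0$ (coefficients only continuous, not H\"older) the consistency error still tends to $0$ by uniform continuity / dominated convergence but without a rate.

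Part (c) is the main obstacle and the point of the lemma. The subtlety is that $a_h$ is not globally Lipschitz with an $h$-independent constant in time — it is only piecewise constant in time and $C^\beta$ in space with $\sup_h\triple a_h\triple_{0,\beta}<\infty$. Crucially, on each interval $[t_n,t_{n+1})$ the function $a_h(t,\cdot)=a_h(t_n,\cdot)$ is \emph{exactly} constant in time, so the time-consistency error that produced the $h^\alpha$ term in (b) is now identically zero on the level of the coefficient; the only error is $a_h(t_n,\mathcal{X}^{1,h}_t)-a_h(t_n,\mathcal{X}^{1,h}_{t_n})$, i.e.\ the spatial displacement of the continuous solution over one step. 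In $L^2$ this displacement is $O(h^{1/2})$ generically and $O(h)$ when $\sigma$ is additive ($\alpha=1$, $D_x\sigma=0$) by the additive-noise argument, matching the target $h^\alpha$ with $\alpha\in\{1/2,\dots\}\cup\{1\}$; the diffusion coefficient $\sigma$ is the \emph{same} genuine H\"older-in-time function on both equations so its consistency error contributes the $h^\alpha$ time-H\"older term exactly as in (b). The delicate book-keeping is to carry all Gr\"onwall constants uniformly in $h$: this works because they depend only on $|a_h|_{0,1,0}\le\sup_h\triple a_h\triple_{0,\beta}$, on $\triple\sigma\triple_{\alpha,\beta}$, and on the uniform moment bound \eqref{eq:stability}, none of which blow up. I would therefore mirror the computation of (b) verbatim, replacing $a$ by $a_h$ throughout, noting the two structural simplifications (zero time-consistency error for the drift; same $\sigma$) and concluding $\|\mathcal{X}^{1,h}-\hat{\mathcal{X}}^{3,h}\|_{\mathcal{S}^2}\le C h^\alpha$ with $C$ independent of $h$, which gives the claimed limit.
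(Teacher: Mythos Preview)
Your plan is correct and takes a genuinely different, more elementary route than the paper. The paper invokes the abstract framework of \cite{kruse}, where convergence follows from \emph{bistability} plus \emph{consistency}; its proof consists of verifying that the bistability constants (via assumptions (S1)--(S2) there) and the consistency order are uniform in $h$, which is the delicate point precisely because the drift $a_h$ is $h$-dependent and not assumed to converge. For $\alpha=1$ the paper observes that under additive noise Euler coincides with the Milstein (It\^o--Taylor order~1) scheme and quotes the consistency proof for the latter, using $\beta=2$. You instead carry out a direct error analysis via the error equation, Lipschitz splitting, BDG and Gr\"onwall, with all constants tracked to be $h$-uniform. Your route is self-contained and more transparent, at the cost of more writing; the paper's route is concise but outsources the heavy lifting to \cite{kruse,kruse2}.

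One caution for part (c) in the case $\alpha=1$: you write that the spatial displacement $\mathcal{X}_t^{1,h}-\mathcal{X}_{t_n}^{1,h}$ is $O(h)$ in $L^2$ under additive noise. This is not true---the Brownian increment keeps it at $O(h^{1/2})$. What \emph{is} true, and what you correctly identify in (b) as the ``second expansion / additive-noise trick'', is that the local truncation error $\int_{t_n}^{t_{n+1}}\big[a_h(t_n,\mathcal{X}_s^{1,h})-a_h(t_n,\mathcal{X}_{t_n}^{1,h})\big]\d s$ decomposes, via It\^o's formula applied to $a_h(t_n,\cdot)$ (which uses $\beta=2$), into an $O(h^2)$ bounded-variation part and an $O(h^{3/2})$ conditionally mean-zero part; orthogonality of the latter across steps then gives the global $O(h)$ bound. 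Mirroring (b) in (c) does work, but this step must be carried, not bypassed by a displacement estimate.
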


\begin{proof}
Due to the assumption on $a_h$, there exists a finite constant $C$ such that
\begin{align*}
    \|a_h(t,x)\|
    \leq
    \sup_{h\in(0,1)}
    \sup_{t\in[0,T]}
    \big(
      \|a_h(t,0)\|+|a_h(t,\cdot)|_1
    \big)
    \|x\|
    \leq C\|x\|.
\end{align*}
This is enough to show the first assertion, the stability estimate, using a Gronwall argument. For the convergence, we rely on \cite{kruse2012characterization}, which covers both the cases $\alpha\in[0,\frac12]$ and $\alpha=1$. It is based on the fact that bistability and consistency of order $\alpha$, in the sense of \cite{kruse2012characterization}, imply convergence of order $\alpha$. We start with $\mathcal{X}^{1,h}\to\mathcal{X}^{3,h}$ and the convergence $\mathcal{X}^{2,h}\to\mathcal{X}$ is immediate afterwards. Since the drift coefficients of $\mathcal{X}^{1,h}$ and $\mathcal{X}^{3,h}$ are both $h$-dependent, are not assumed to converge, what we want to prove, i.e., 
$\lim_{h\to0}\|\mathcal{X}_t^{1,h}-\hat{\mathcal{X}}_t^{3,h}\|_{\mathcal{S}^2(\R^d)} =0$ does not say anything about convergence of $\mathcal{X}^{3,h}$ and $\mathcal{X}^{1,h}$. In particular, $\mathcal{X}_t^{1,h_1}-\hat{\mathcal{X}}_t^{3,h_2}$ is in general not small for $h_1\neq h_2$. Due to this special setting, we have to verify that the proofs of \cite{kruse2012characterization} are still valid.

We start with proving bistability. First, in \cite[Lemma 4.1]{kruse2012characterization}, it is proven that for small enough $h$ the numerical scheme is bijective. For our proof, it is important that the upper bound for this property does not depend on $h$. From the proof, it is clear that this bound depends on the reciprocal of $\sup_{h\in(0,1)}\sup_{t\in[0,T]}|a_h(t,\cdot)|_1$ and is therefore bounded away from zero by assumption. By following the proof of \cite[Lemma 4.2]{kruse2012characterization} line by line, the bistability constants are bounded by $\sup_{h\in(0,1)}\sup_{t\in[0,T]}|a_h(t,\cdot)|_1$ and its reciprocal when Assumptions~(S1) and (S2) in \cite{kruse2012characterization} hold. It remains to prove (S1) and (S2) with uniform constants. First, (S1) is trivially satisfied with $L=0$ for any explicit scheme. By inspection of the proof of \cite[Theorem 3.3]{kruse2012characterization}, we see that, for $\alpha\in[0,\frac12]$, corresponding to the stochastic $\theta$-method with $\theta=0$, (S2) is satisfied for all $h$ with $L=3T\sup_{h\in(0,1)}\sup_{t\in[0,T]}|a_h(t,\cdot)|_1^2+12d|\sigma|_1^2$. For $\alpha=1$, (S2) holds for the same $L$ as the schemes are the same. This concludes the proof of bistability. 

Consistency for $\alpha\in[0,\frac12]$ is obtained by observing that the proof in \cite{kruse2012characterization}, referring for details to \cite{beyn2010two}, only relies on \eqref{eq:stability} and the $\frac12$-H\"older continuity of $a_h$ between the grid points, and this is clearly satisfied since $a_h$ is constant on $[t_n,t_{n+1})$. For consistency in case $\alpha=1$, we rely on the proof of consistency for the It\^o-Taylor scheme of order one, i.e., the Milstein scheme, coinciding with our scheme under the assumption of additive noise. It suffices to note that the analysis is conducted per interval, and to check that $f_\alpha$ is globally Lipschitz continuous for $\alpha\in\mathcal B(\mathcal{A}_1)$, in the notation of \cite{kruse2012characterization}. For this to hold, it suffices that $\sup_{h\in(0,1)}\sup_{t\in[0,T]}|a_h(t,\cdot)|_2<\infty$. By assumption, this completes the proof for $\mathcal{X}^{1,h}$ and $\mathcal{X}^{3,h}$. For $\mathcal{X}^{2,h}$ and $\mathcal{X}$, the convergence is included in \cite{kruse2012characterization}, up to the order of consistency, which in our setting is lower when $\alpha\in[0,\frac12)$. It is straight forward to use $\alpha$-H\"older continuity for $\alpha\in(0,\frac12)$, exactly as in the case $\alpha=\frac12$, and this way get consistency of order $\alpha$. For $\alpha=0$, one uses dominated convergence instead to obtain the convergence without order. This is valid under our assumptions.
\end{proof}

Our next Lemma is a weak convergence type result. With our assumption of low regularity on the FBSDE coefficients, the weak rate does not exceed the strong rate and therefore the proof relies on the strong convergence results of Lemma~\ref{lemma_strong_conv}. Obtaining weak convergence order $\alpha\in(\frac12,1)$ for multiplicative noise, would otherwise require a H\"older condition on the third derivative of $b,\sigma,f,g$ and the case $\alpha=1$ requires four derivatives \cite{mikulevicius1991rate,kohatsu2017weak}.

\begin{lemma}\label{weak_conv_value_fun}
Suppose the setting of Subsection~\ref{sec:setting} holds.
For all functions $\zeta\in \mathcal{Z}^{\alpha,\beta}$, collection of functions $\zeta_h\in \mathcal{Z}_h^\beta$, $h\in(0,1)$, satisfying
$
    \sup_{h\in(0,1)}
    \triple
      \hat{\zeta}_h
    \triple_{0,\beta}
    <\infty,
$ and $\lambda\geq0$, there exists a constant $C$, such that
\begin{align*}
     \lim_{h\to0}
     h^{-\alpha}
     \Big(
       |\Phi_\lambda(\zeta)-\Phi_{\lambda,h}(\check{\zeta})|
       +
       |\Phi_\lambda(\hat{\zeta}_h)-\Phi_{\lambda,h}(\zeta_h)|
     \Big)
     =  
    \begin{dcases}
      0\quad&\textrm{if } \alpha=0,\\
      C&\textrm{otherwise}.
    \end{dcases}
\end{align*}
\begin{proof}
We start with the first term and observe that 
\begin{align}\label{eq:weak_split}
    |\Phi_\lambda(\zeta)-\Phi_{\lambda,h}(\check{\zeta})|
    \leq
    \big|\E\big[\mathcal{Y}_0^{\zeta}
      -
     \mathcal{Y}_0^{h,\check{\zeta}}\big]\big|
    +
    \lambda
    \big|\mathrm{Var}\big(\mathcal{Y}_0^{\zeta}\big)
      -
    \mathrm{Var}\big(\mathcal{Y}_0^{h,\check{\zeta}}\big)\big|.
\end{align}
For the proof of both terms of \eqref{eq:weak_split}, we rely on convergence of $(\E\big[(\mathcal{Y}_0^{\zeta}-\mathcal{Y}_0^{h,\check{\zeta}})^2\big])^\frac12$, which we next prove, beginning with $\alpha>0$. For $t\in[t_n,t_{n+1})$, denote $\bar t=t_n$. We start by observing that by the definition,
\begin{align*}
    \mathcal{Y}_0^{\zeta}-\mathcal{Y}_0^{h,\check{\zeta}}
     &=g(X_T^\zeta)-g(X_T^{h,\check\zeta})
     +
     \int_0^T\big(f\big(t,X_t^\zeta,\zeta(t,X_t^\zeta)\big)-f\big(\bar{t},\hat X_t^{h,\check\zeta},\zeta(\bar{t},\hat X_t^{h,\check\zeta})\big)\big)\d t\\
     &\quad+
     \int_0^T\big\langle\zeta(t,X_t^\zeta)-\zeta(\bar{t},\hat{X}_t^{h,\check\zeta}),\d W_t\big\rangle\\
     &=
     g(X_T^\zeta)-g(X_T^{h,\check\zeta})
     +
     \int_0^T\big(f\big(t,X_t^\zeta,\zeta(t,X_t^\zeta)\big)-f\big(t,\hat X_t^{h,\check\zeta},\zeta(t,\hat X_t^{h,\check\zeta})\big)\big)\d t\\
     &
     \quad+
     \int_0^T\big(f\big(t,\hat X_t^{h,\check\zeta},\zeta(t,\hat X_t^{h,\check\zeta})\big)-f\big(\bar{t},\hat X_t^{h,\check\zeta},\zeta(\bar{t},\hat X_t^{h,\check\zeta})\big)\big)\d t
     +\int_0^T\big\langle\zeta(t,X_t^\zeta)-\zeta(t,\hat{X}_t^{h,\check\zeta}),\d W_t\big\rangle
    \\
     &\quad +\int_0^T\big\langle\zeta(t,X_t^\zeta)-\zeta(t,\hat{X}_t^{h,\check\zeta}),\d W_t\big\rangle+\int_0^T\big\langle\zeta(t,\hat{X}_t^{h,\check\zeta})-\zeta(\bar{t},\hat{X}_t^{h,\check\zeta}),\d W_t\big\rangle
\end{align*}
From the It\^o Isometry and the triangle inequality, we have
\begin{align*}
    \big(
    \E
    \big[
      (\mathcal{Y}_0^{\zeta}-\mathcal{Y}_0^{h,\check{\zeta}})^2
    \big]
    \big)^\frac12
     &\leq
     \big(
     \E 
     \big[
       (g(X_T^\zeta)-g(X_T^{h,\check\zeta}))^2
     \big]
     \big)^\frac12
     +
     \int_0^T
       \big(
         \E\big[
           \big(f\big(t,X_t^\zeta,\zeta(t,X_t^\zeta)\big)-f\big(t,\hat X_t^{h,\check\zeta},\zeta(t,\hat X_t^{h,\check\zeta})\big)\big)^2\big]\big)^\frac12\d t\\
     &
     \quad+
     \int_0^T
       \E\big[
         \big(f\big(t,\hat X_t^{h,\check\zeta},\zeta(t,\hat X_t^{h,\check\zeta})\big)-f\big(\bar{t},\hat X_t^{h,\check\zeta},\zeta(\bar{t},\hat X_t^{h,\check\zeta})\big)\big)^2
       \big]\big)^\frac12\d t\\
     &\quad+
     \Big(
       \int_0^T
         \E\big[
           \big\| \zeta(t,X_t^\zeta)-\zeta(t,\hat X_t^{h,\check\zeta})
           \big\|^2
         \big]\d t\Big)^\frac12
    +
    \Big(
       \int_0^T
         \E\big[
           \big\|\zeta(t,\hat X_t^{h,\check\zeta})-\zeta(\bar t,\hat X_t^{h,\check\zeta})
           \big\|^2
         \big]\d t\Big)^\frac12\\
    &= I_1+I_2+I_3+I_4+I_5.
\end{align*}

For $I_1$, we use Lipschitz continuity of $g$ and the Cauchy-Schwarz inequality, to get
\begin{align}\label{eq:I1}
    I_1
    \leq
    |g|_1\E\big[\|X_T^\zeta-X_T^{h,\check\zeta}\|\big]
    \leq
    |g|_1\big(\E\big[\|X_T^\zeta-X_T^{h,\check\zeta}\|^2\big]\big)^\frac12
    \leq
    |g|_1 \big\|X^\zeta-\hat X^{h,\check\zeta}\big\|_{\mathcal{S}^2(\R^d)}.
\end{align}
The function $\phi(t,x)=f(t,x,\zeta(t,x))$ is uniformly Lipschitz continuous in $x$ with Lipschitz constant
$C_\phi=|f|_{0,1,0}+|f|_{0,0,1}|\zeta|_{0,1}$.
This implies, together with the Cauchy-Schwarz inequality,
\begin{align}\label{eq:I2}
    I_2
    \leq
    C_\phi
    \int_0^T
       \big(\E\big[\|X_t^\zeta-\hat X_t^{h,\check\zeta}\|^2\big]\big)^\frac12
    \d t
    \leq
    C_\phi T
       \big\|X^\zeta-\hat X^{h,\check\zeta}\big\|_{\mathcal{S}^2(\R^d)}.
\end{align}
Using the assumptions of $b$ and $\zeta$, it is easily verified that $\phi$ belongs to $C_{\mathrm{H,b}}^{\alpha,\beta}([0,T]\times\R^d;\R^d)$ and therefore, by \eqref{eq:I1}, \eqref{eq:I2} and Lemma~\ref{lemma_strong_conv}, we have $I_1+I_2\leq C h^\alpha$. For the term $I_3$, we see that
\begin{align*}
    I_3
    &\leq
    \Big(|f|_{\alpha,0,0} + |f|_{0,0,1}|\zeta|_{\alpha,0}\Big)
    \Bigg(1+\sup_{h\in(0,1)}
    \|\hat X^{h,\check\zeta}\|_{\mathcal{S}^2(\R^d)}
    \Bigg)
    \sum_{n=0}^{N-1}
    \int_{t_n}^{t_{n+1}}(t- t_n)^\alpha \d t\\
    &=
    \frac{|f|_{\alpha,0,0} + |f|_{0,0,1}|\zeta|_{\alpha,0}}{1+\alpha}
    \Bigg(1+
      \sup_{h\in(0,1)}
      \|\hat X^{h,\check\zeta}\|_{\mathcal{S}^2(\R^d)}
    \Bigg)
    N h^{1+\alpha}
    \leq
    C h^\alpha.
\end{align*}
Similarly, for $I_4$ and $I_5$ we have
\begin{align*}
    I_4
    \leq
    |\zeta|_{0,1}
    \Big(
      \int_0^T
        \E
        \big[
           \| 
             X_t^\zeta-\hat X_t^{h,\check\zeta}
           \|^2
        \big]
      \d t
    \Big)^\frac12
    \leq
    |\zeta|_{0,1}
    T^\frac12
    \|
      X^\zeta-\hat X^{h,\check\zeta}
    \|_{\mathcal{S}^2(\R^d)}
    \leq
    C h^\alpha,
\end{align*}
and
\begin{align*}
    I_5
    &\leq
    |\zeta|_{\alpha,0}
    \Big(
      1+\sup_{h\in(0,1)}\|\hat{X}^{h,\check\zeta}\|_{\mathcal{S}^2(\R^d)}
    \Big)
    \Bigg(
       \sum_{n=0}^{N-1}
       \int_{t_n}^{t_{n+1}}
         |t-t_n|^{2\alpha}\d t
    \Bigg)^\frac12\\
    &=
    \frac{|\zeta|_{\alpha,0}}{1+2\alpha}
    \Big(
      1+\sup_{h\in(0,1)}\|\hat{X}^{h,\check\zeta}\|_{\mathcal{S}^2(\R^d)}
    \Big)
    \big(
       N
       h^{1+2\alpha}
    \big)^\frac12   
    \leq C h^\alpha.
\end{align*}

For $\alpha=0$, the terms $I_1, I_2, I_4$ need no special treatment. For $I_3$, we notice that the function $\phi$ is only continuous in $t$ and convergence without rate holds by the Dominated Convergence Theorem. This is verified by noting that 
\begin{align*}
    &\sup_{t\in[0,T]}
    \Big(\E\Big[\Big(f\big(t,\hat{X}_t^{h,\check\zeta},\zeta(t,\hat{X}_t^{h,\check\zeta})\big)-f\big(\bar{t},\hat{X}_t^{h,\check\zeta},\zeta(\bar{t},\hat{X}_t^{h,\check\zeta})\big)\Big)^2\Big]\Big)^\frac12
    \leq
    2\sup_{t\in[0,T]}
    \Big(\E\Big[\big(f\big(t,\hat{X}_t^{h,\check\zeta},\zeta(t,\hat{X}_t^{h,\check\zeta})\big)^2\Big]\Big)^\frac12\\
    &\qquad
    \leq
    2\sup_{t\in[0,T]}
    \Big(\E\Big[\big(f\big(t,\hat{X}_t^{h,\check\zeta},\zeta(t,\hat{X}_t^{h,\check\zeta})-f(t,0,0)\big)^2\Big]\Big)^\frac12
    +
    2\sup_{t\in[0,T]}
    |f(t,0,0)|\\
    &\qquad
    \leq
    2\sup_{t\in[0,T]}
    \big(
      |f(t,\cdot,\cdot)|_{1,0}+|f(t,\cdot,\cdot)|_{0,1}
      \big(
        |\zeta(t,\cdot)|_1
        +
        \|\zeta(t,0)\|
      \big)
    \big)
    \sup_{h\in(0,1)}\|\hat{X}^{h,\check\zeta}\|_{\mathcal{S}^2(\R^d)}
    +
    2\sup_{t\in[0,T]}
    \big|\E\big[f\big(t,0,0\big)\big|.    
\end{align*}
Due to the assumptions, the right-hand side is finite. The term $I_5$ admits a similar treatment and we refrain from giving details. This proves that 
\begin{align}\label{eq:y0}
    \lim_{h\to0}
    \big(
    \E
    \big[
      (\mathcal{Y}_0^{\zeta}-\mathcal{Y}_0^{h,\check{\zeta}})^2
    \big]
    \big)^\frac12
    =
    \begin{dcases}
      0\quad&\textrm{if } \alpha=0,\\
      C&\textrm{otherwise}.
    \end{dcases}
\end{align}
For the first term in \eqref{eq:weak_split}, this and the fact that 
$    
    \big|\E\big[\mathcal{Y}_0^{\zeta}
      -
     \mathcal{Y}_0^{h,\check{\zeta}}\big]\big|
     \leq
     \big(
    \E
    \big[
      (\mathcal{Y}_0^{\zeta}-\mathcal{Y}_0^{h,\check{\zeta}})^2
    \big]
    \big)^\frac12
$ prove the convergence. 

For the second term in \eqref{eq:weak_split}, we use the conjugate rule, Cauchy-Schwarz' inequality, the triangle inequality, to get
\begin{align*}
    \big|
      \mathrm{Var}
      (\mathcal{Y}_0^\gamma)
      -
      \mathrm{Var}
      (\mathcal{Y}_0^{h,\check\gamma})
    \big|
&   =
    \Big|
      \E
      \Big[\big|\E[\mathcal{Y}_0^\gamma]-\mathcal{Y}_0^\gamma\big|^2
      -
      \big|\E\big[\mathcal{Y}_0^{h,\check\gamma}\big]-\mathcal{Y}_0^{h,\check\gamma}\big|^2\Big]
    \Big|\\
&   =
    \big|
      \E\big[
        \big(
          \E[\mathcal{Y}_0^\gamma-\mathcal{Y}_0^{h,\check\gamma}\big]
        +\mathcal{Y}_0^{h,\check\gamma}-\mathcal{Y}_0^\gamma
        \big)
        \big(\E[\mathcal{Y}_0^\gamma+\mathcal{Y}_0^{h,\check\gamma}\big]
        -\mathcal{Y}_0^{h,\check\gamma}-\mathcal{Y}_0^\gamma
        \big)
      \big]
    \big|\\
&   \leq
    \Big(
      \E\Big[
        \big(
          \E[\mathcal{Y}_0^\gamma-\mathcal{Y}_0^{h,\check\gamma}\big]
        +\mathcal{Y}_0^{h,\check\gamma}-\mathcal{Y}_0^\gamma
        \big)^2
      \Big]
    \Big)^\frac12
    \Big(
      \E\Big[
        \big(
        \E\big[
          \mathcal{Y}_0^\gamma+\mathcal{Y}_0^{h,\check\gamma}\big]
        -\mathcal{Y}_0^{h,\check\gamma}-\mathcal{Y}_0^\gamma
        \big)^2
      \Big]
    \Big)^\frac12\\
&   \leq
    \Big(
      \big|
        \E[\mathcal{Y}_0^\gamma-\mathcal{Y}_0^{h,\check\gamma}\big]
      \big|
      +
      \big(
        \E
        \big[
          \big(\mathcal{Y}_0^{h,\check\gamma}-\mathcal{Y}_0^\gamma\big)^2
        \big]
      \big)^\frac12
    \Big)
    \Big(
      \big|
        \E[\mathcal{Y}_0^\gamma+\mathcal{Y}_0^{h,\check\gamma}\big]
      \big|
      +
      \big(
        \E
        \big[
          \big(\mathcal{Y}_0^{h,\check\gamma}+\mathcal{Y}_0^\gamma\big)^2
        \big]
      \big)^\frac12
    \Big)\\
&   \leq
    4
    \Big(
      \big(
        \E\big[
          \big(
            \mathcal{Y}_0^\zeta
          \big)^2
        \big]
      \big)^\frac12
      +
      \sup_{h\in(0,1)}
      \big(
        \E\big[
          \big(
            \mathcal{Y}_0^{h,\check\zeta}
          \big)^2
        \big]
      \big)^\frac12
    \Big)
      \big(
        \E
        \big[
          \big(\mathcal{Y}_0^{h,\check\gamma}-\mathcal{Y}_0^\gamma\big)^2
        \big]
      \big)^\frac12.
\end{align*}
Together with \eqref{eq:y0}, this completes the proof for the term $|\Phi_\lambda(\zeta)-\Phi_{\lambda,h}(\check{\zeta})|$.

For the second term, $|\Phi_\lambda(\hat{\zeta}_h)-\Phi_{\lambda,h}(\zeta_h)|$, we define the functions $a_h\colon[0,T]\times\R^d\to\R^d$, $h\in(0,1)$, by $a_h=\hat{c}_h$, where $c_h\colon\{0,\dots,N\}\times\R^d\to\R^d$ are given by  $c_{h,n}(x)=b(t_n,x,\zeta_{h,n}(x))$. By the assumptions on $b$ and $\zeta_h$, we have that $a_h$ satisfy the assumption of Lemma~\ref{lemma_strong_conv}. This implies that $\lim_{h\to0}\|X^{\zeta_h}-\hat{X}^{h,\zeta_h}\|_{\mathcal{S}^2(\R^d)}=0$ and $\|X^{\zeta_h}-\hat{X}^{h,\zeta_h}\|_{\mathcal{S}^2(\R^d)}\leq C h^\alpha$ for $\alpha>0$. This fact, together with the same calculations as for $I_1,I_2,I_4,I_5$, and noting that the analogous for $I_3$ is zero, completes the proof.
\end{proof}
\end{lemma}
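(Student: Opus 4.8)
The plan is to reduce both differences in the statement to a single $L^2$ estimate on the stochastic cost. Since $\Phi_\lambda(\zeta)=\E[\mathcal{Y}_0^\zeta]+\lambda\,\mathrm{Var}(\mathcal{Y}_0^\zeta)$ and likewise for the discrete objective, I would first split
\[
  |\Phi_\lambda(\zeta)-\Phi_{\lambda,h}(\check\zeta)|
  \le
  \big|\E[\mathcal{Y}_0^\zeta-\mathcal{Y}_0^{h,\check\zeta}]\big|
  +\lambda\big|\mathrm{Var}(\mathcal{Y}_0^\zeta)-\mathrm{Var}(\mathcal{Y}_0^{h,\check\zeta})\big|.
\]
The mean term is at most $(\E[(\mathcal{Y}_0^\zeta-\mathcal{Y}_0^{h,\check\zeta})^2])^{1/2}$ by Jensen's inequality. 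For the variance term I would use the conjugate identity $\mathrm{Var}(a)-\mathrm{Var}(b)=\E[(\E[a-b]+b-a)(\E[a+b]-a-b)]$ together with the Cauchy--Schwarz inequality, so that it too is bounded by a constant multiple of $(\E[(\mathcal{Y}_0^\zeta-\mathcal{Y}_0^{h,\check\zeta})^2])^{1/2}$, the constant involving $\|\mathcal{Y}_0^\zeta\|_{L^2}$ and $\sup_{h}\|\mathcal{Y}_0^{h,\check\zeta}\|_{L^2}$; the latter is finite by the stability estimate \eqref{eq:stability} and the boundedness of the coefficients. Thus everything reduces to showing that $(\E[(\mathcal{Y}_0^\zeta-\mathcal{Y}_0^{h,\check\zeta})^2])^{1/2}$ is $O(h^\alpha)$, uniformly in $h$, with convergence to $0$ (and no rate) when $\alpha=0$.

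For this core estimate I would write $\mathcal{Y}_0^\zeta-\mathcal{Y}_0^{h,\check\zeta}$ as the sum of the terminal difference $g(X_T^\zeta)-g(X_T^{h,\check\zeta})$, the running-cost difference $\int_0^T\!\big(f(t,X_t^\zeta,\zeta(t,X_t^\zeta))-f(\bar t,\hat X_t^{h,\check\zeta},\zeta(\bar t,\hat X_t^{h,\check\zeta}))\big)\,\d t$ with $\bar t=t_n$ on $[t_n,t_{n+1})$, and the martingale part $\int_0^T\langle\zeta(t,X_t^\zeta)-\zeta(\bar t,\hat X_t^{h,\check\zeta}),\d W_t\rangle$. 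Applying the It\^o isometry to the martingale term and the triangle inequality throughout, I would split the running-cost and martingale terms each into a spatial piece, in which the time argument agrees on both sides, and a temporal piece comparing $t$ with $\bar t$. The spatial pieces are controlled by the Lipschitz constants of $g$, of $\phi(t,x)\coloneqq f(t,x,\zeta(t,x))$ --- which one checks belongs to $C_{\mathrm{H,b}}^{\alpha,\beta}([0,T]\times\R^d;\R)$, with Lipschitz constant $|f|_{0,1,0}+|f|_{0,0,1}|\zeta|_{0,1}$ --- and of $\zeta$, multiplied by $\|X^\zeta-\hat X^{h,\check\zeta}\|_{\mathcal{S}^2(\R^d)}$, which by Lemma~\ref{lemma_strong_conv} is $O(h^\alpha)$ for $\alpha>0$ and $o(1)$ for $\alpha=0$. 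The temporal pieces, using the $\alpha$-H\"older regularity in time of $f$ and $\zeta$ and the uniform moment bound on $\hat X^{h,\check\zeta}$ from \eqref{eq:stability}, are bounded by a constant times $\sum_{n}\int_{t_n}^{t_{n+1}}(t-t_n)^\alpha\,\d t=O(Nh^{1+\alpha})=O(h^\alpha)$, with the exponent $2\alpha$ appearing under the square root in the martingale estimate. For $\alpha=0$ the temporal pieces carry no rate and one concludes by the dominated convergence theorem, the integrands being dominated uniformly in $h$ because the coefficients and $\zeta$ are bounded.

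The second term $|\Phi_\lambda(\hat\zeta_h)-\Phi_{\lambda,h}(\zeta_h)|$ is handled by the same decomposition, now comparing the continuous FBSDE driven by the Markov map $\hat\zeta_h$ with its Euler--Maruyama discretization driven by $\zeta_h$. One simplification appears here: $\zeta_h$ is piecewise constant in time by construction, so the temporal pieces vanish identically and only the spatial ones remain. To control $\|X^{\zeta_h}-\hat X^{h,\zeta_h}\|_{\mathcal{S}^2(\R^d)}$ I would apply Lemma~\ref{lemma_strong_conv} with $a_h\coloneqq\widehat{c}_h$, where $c_{h,n}(x)=b(t_n,x,\zeta_{h,n}(x))$; its hypotheses hold because of the assumptions on $b$ and the uniform bound $\sup_{h\in(0,1)}\triple\hat\zeta_h\triple_{0,\beta}<\infty$.

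The genuinely delicate point --- and the main obstacle --- is precisely this uniformity in $h$. The drift coefficients on both sides of the second comparison are $h$-dependent and are not assumed to converge, so no telescoping across step sizes is available and $X^{\zeta_{h_1}}$ need not be close to $\hat X^{h_2,\zeta_{h_2}}$ when $h_1\neq h_2$. One must therefore verify --- as Lemma~\ref{lemma_strong_conv} already arranges --- that the bistability and consistency constants entering the strong-convergence estimate depend only on $\sup_{h}\triple\hat\zeta_h\triple_{0,\beta}$ and its reciprocal, on $|\sigma|_1$, and on the fixed data $T$ and $d$, never on $h$ itself. Granting this, the two estimates combine to give the claimed limit with a constant $C$ independent of $h$.
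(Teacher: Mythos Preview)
Your proposal is correct and follows essentially the same route as the paper: reduce both the mean and variance parts to the single estimate $(\E[(\mathcal{Y}_0^\zeta-\mathcal{Y}_0^{h,\check\zeta})^2])^{1/2}$ via Jensen and the conjugate rule, expand the stochastic cost difference into terminal, running-cost and martingale pieces, split each of the latter into a spatial part controlled by Lemma~\ref{lemma_strong_conv} and a temporal part controlled by the $\alpha$-H\"older regularity, handle $\alpha=0$ by dominated convergence, and treat $|\Phi_\lambda(\hat\zeta_h)-\Phi_{\lambda,h}(\zeta_h)|$ identically after observing that the temporal pieces vanish. The only slight slip is the phrase ``the coefficients and $\zeta$ are bounded'' in the dominated-convergence step: they have linear growth, and the domination comes from this together with the uniform moment bound $\sup_h\|\hat X^{h,\check\zeta}\|_{\mathcal{S}^2}<\infty$, exactly as the paper writes out.
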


\subsection{Time discretization error of the initial and terminal values}
\label{sec:Y0}

This and the next section contain our main results. The results are based on an assumption of regularity of the Markov maps $\zeta^*\in\mathcal Z$ and $\zeta_{\lambda,h}^*\in\mathcal{Z}_h$, $h\in(0,1)$, $\lambda\geq0$, of the FBSDE and its discretizations, respectively. The regularity has to be verified for specific problems and this, in particular for the discrete problem, is a non-trivial and also not a well studied problem, see \cite{buttazzo1982gamma,rockafellar2009variational,bonnans2019time}. For the continuous problem, it breaks down into existence and uniqueness of a solution of the FBSDE and regularity of the solution to the HJB equation. In this paper, we provide only the abstract error analysis. Our assumption is next stated.

\begin{assumption}\label{as:Y0}
There exists a unique minimizer $\zeta^*\in\mathcal Z$ of $\zeta\mapsto \Phi_1(\zeta)$, it belongs to $\mathcal{Z}^{\alpha,\beta}$ and satisfies $\mathrm{Var}(\mathcal{Y}_0^{\zeta^*})=0$. Moreover, for all $\lambda\geq0$, there exists a collection of minimizers $\zeta_{\lambda,h}^*\in\mathcal{Z}_h$, $h\in(0,1)$, of $\zeta\mapsto\Phi_{\lambda,h}(\zeta)$ that for each $h$ belongs to $\mathcal{Z}_h^\beta$ and satisfies
$
    \sup_{h\in(0,1)}
    \triple
      \hat{\zeta}_{\lambda,h}^*
    \triple_{0,\beta}
    <\infty.
$
\end{assumption}

Our first result concerns the convergence of the objective function. The proof relies on a split of the error into two error terms, one containing only $\zeta^*$ and one containing only $\zeta_{\lambda,h}^*$. This way, we avoid getting an error bound containing the error between the continuous and discrete Markov maps. The result is used throughout our proofs, except in Proposition~\ref{prop_strong}, where we have the error of the Markov map in the bound. 

\begin{theorem}\label{thm_Y0}
Suppose the setting of Subsection~\ref{sec:setting}, let Assumption~\ref{as:Y0} hold and $\lambda\geq0$. Then, $|\Phi^*-\Phi_{\lambda,h}^*|\to0$ as $h\to0$, and if $\alpha>0$, then there exists a constant $C$, independent of $h$, such that
\begin{equation*}
    |\Phi^*-\Phi_{\lambda,h}^*|\leq C h^\alpha.
\end{equation*}
\begin{proof}
We start by noting that, since $\mathrm{Var}(\mathcal{Y}_0^{\zeta^*})=0$, it holds that $\zeta^*$ is optimal for all $\Phi_\lambda$, $\lambda\geq0$, and $\Phi^*=\Phi_\lambda^*:=\Phi_\lambda(\zeta^*)$. Because of optimality of $\zeta^*\in\mathcal Z$ and since $\hat{\zeta}_h^*\in\mathcal Z$, it holds that $\Phi^*=\Phi_\lambda^*\leq \Phi_\lambda(\hat{\zeta}_h^*)$. This implies
\begin{equation*}
    \Phi^*\leq \Phi_\lambda(\hat{\zeta}_{\lambda,h}^*)
    =
    \Phi_{\lambda,h}^* + \Phi_\lambda(\hat{\zeta}_{\lambda,h}^*) -\Phi_{\lambda,h}^*
    \leq 
    \Phi_{\lambda,h}^* +|\Phi_\lambda(\hat{\zeta}_{\lambda,h}^*) -\Phi_{\lambda,h}^*|.
\end{equation*}
Similarly, from the optimality of $\zeta_{\lambda,h}^*\in\mathcal{Z}_h$ and since $\check{\zeta}^*\in\mathcal{Z}_h$, it holds  $\Phi_{\lambda,h}^*\leq \Phi_{\lambda,h}(\check{\zeta}^*)$. We have
\begin{equation*}
   \Phi_{\lambda,h}^*\leq \Phi_{\lambda,h}(\check{\zeta}^*)
   =
   \Phi^*+\Phi_{\lambda,h}(\check{\zeta}^*)-\Phi^*
   \leq
   \Phi^*+|\Phi_{\lambda,h}(\check{\zeta}^*)-\Phi^*|.
\end{equation*}
The two inequalities equivalently read
$
    \Phi^*-\Phi_{\lambda,h}^*\leq|\Phi(\hat{\zeta}_{\lambda,h}^*) -\Phi_{\lambda,h}^*|$
and
$
    \Phi_{\lambda,h}^*-\Phi^*\leq |\Phi_{\lambda,h}(\check{\zeta}^*)-\Phi^*|,
$
and thus
\begin{equation*}
    |\Phi^*-\Phi_{\lambda,h}^*|\leq \max(|\Phi(\hat{\zeta}_{\lambda,h}^*) -\Phi_{\lambda,h}^*|,|\Phi_{\lambda,h}(\check{\zeta}^*)-\Phi^*|)\leq|\Phi(\hat{\zeta}_{\lambda,h}^*) -\Phi_{\lambda,h}^*|+ |\Phi_{\lambda,h}(\check{\zeta}^*)-\Phi^*|.
\end{equation*}
The convergence is given by our assumption and Lemma \ref{weak_conv_value_fun}. This completes the proof. 
\end{proof}
\end{theorem}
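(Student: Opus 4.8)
The plan is to estimate the gap between the two infima $\Phi^{*}=\inf_{\zeta}\Phi_{\lambda}(\zeta)$ and $\Phi_{\lambda,h}^{*}=\inf_{\zeta}\Phi_{\lambda,h}(\zeta)$ by a two-sided comparison that uses the optimizer of each problem as a competitor in the other, so that no bound on the distance between the continuous Markov map $\zeta^{*}$ and the discrete ones $\zeta_{\lambda,h}^{*}$ is ever needed. A preliminary observation is that, by Assumption~\ref{as:Y0}, $\mathrm{Var}(\mathcal{Y}_{0}^{\zeta^{*}})=0$, so $\Phi_{\lambda}(\zeta^{*})=\E[\mathcal{Y}_{0}^{\zeta^{*}}]=\Phi_{0}(\zeta^{*})$ for every $\lambda\geq0$; since also $\Phi_{\lambda}(\zeta)\geq\Phi_{0}(\zeta)\geq\Phi_{0}(\zeta^{*})$ for all admissible $\zeta$ (as $\lambda\geq0$ and variances are nonnegative), the map $\zeta^{*}$ minimizes $\Phi_{\lambda}$ for all $\lambda\geq0$, and therefore $\Phi^{*}$ is indeed independent of $\lambda$ and equal to $\Phi_{\lambda}(\zeta^{*})$, which is what the notation in the statement presupposes.

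Next I would set up the sandwich. Using the interpolated discrete optimizer $\hat{\zeta}_{\lambda,h}^{*}$ as a test function in the continuous variational problem, and the grid-restricted continuous optimizer $\check{\zeta}^{*}$ as a test function in the discrete one, optimality gives
\[
 \Phi^{*}\leq\Phi_{\lambda}(\hat{\zeta}_{\lambda,h}^{*}),\qquad \Phi_{\lambda,h}^{*}\leq\Phi_{\lambda,h}(\check{\zeta}^{*}).
\]
For this to be legitimate one must check admissibility: $\hat{\zeta}_{\lambda,h}^{*}\in\mathcal{Z}$, which holds because $\mathcal{Z}$ only asks for measurability (and the integrability/linear growth needed for well-posedness of \eqref{var_FBSDE}, guaranteed by the uniform bound in Assumption~\ref{as:Y0}), and a piecewise-constant interpolation is measurable; and $\check{\zeta}^{*}\in\mathcal{Z}_{h}$, which follows from boundedness and $C^{\beta}$-regularity of $\zeta^{*}$ since the discrete equations \eqref{var_discrete_FBSDE} are just an explicit recursion. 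Subtracting $\Phi_{\lambda,h}^{*}=\Phi_{\lambda,h}(\zeta_{\lambda,h}^{*})$ and $\Phi^{*}=\Phi_{\lambda}(\zeta^{*})$ respectively and combining the two one-sided bounds yields
\[
 |\Phi^{*}-\Phi_{\lambda,h}^{*}|\leq|\Phi_{\lambda}(\hat{\zeta}_{\lambda,h}^{*})-\Phi_{\lambda,h}(\zeta_{\lambda,h}^{*})|+|\Phi_{\lambda,h}(\check{\zeta}^{*})-\Phi_{\lambda}(\zeta^{*})|.
\]

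Finally I would apply Lemma~\ref{weak_conv_value_fun} with $\zeta=\zeta^{*}$ and the collection $\zeta_{h}=\zeta_{\lambda,h}^{*}$: Assumption~\ref{as:Y0} supplies exactly its hypotheses, namely $\zeta^{*}\in\mathcal{Z}^{\alpha,\beta}$, $\zeta_{\lambda,h}^{*}\in\mathcal{Z}_{h}^{\beta}$, and $\sup_{h}\triple\hat{\zeta}_{\lambda,h}^{*}\triple_{0,\beta}<\infty$. The lemma then shows $h^{-\alpha}$ times each term on the right tends to $0$ when $\alpha=0$ (so the sum is $o(1)$) and to a finite limit when $\alpha>0$ (so the sum is $\leq Ch^{\alpha}$ for small $h$, and, enlarging $C$, for all $h\in(0,1)$), which is the claim. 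The main point to get right is conceptual rather than computational: one must keep the two error terms \emph{pure} — one built only from $\zeta^{*}$, the other only from $\zeta_{\lambda,h}^{*}$ — and resist bounding $\Phi_{\lambda}(\hat{\zeta}_{\lambda,h}^{*})-\Phi^{*}$ directly, which would force a comparison of the interpolated discrete minimizers with $\zeta^{*}$, i.e.\ a $\Gamma$-convergence-type statement about minimizers that is not available here; the admissibility checks for $\hat{\zeta}_{\lambda,h}^{*}$ and $\check{\zeta}^{*}$ are the only other delicate point.
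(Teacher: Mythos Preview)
Your proposal is correct and follows essentially the same approach as the paper: a two-sided optimality sandwich using $\hat{\zeta}_{\lambda,h}^*$ and $\check{\zeta}^*$ as competitors, followed by Lemma~\ref{weak_conv_value_fun}. Your added preliminary observation that $\Phi^*$ is $\lambda$-independent (because $\mathrm{Var}(\mathcal{Y}_0^{\zeta^*})=0$) and your explicit admissibility checks are useful clarifications that the paper leaves implicit, but the core argument is identical.
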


A consequence of the convergence of the objective function is the convergence of the two components of the objective, given that $\lambda>0$. This is stated in our next theorem.

\begin{theorem}\label{thm_Y02}
Suppose the setting of Subsection~\ref{sec:setting}, let Assumption~\ref{as:Y0} hold, $\lambda>0$, $\mathcal{Y}_0^{h,\lambda}\coloneqq \mathcal{Y}_0^{h,\zeta^*_{h,\lambda}}$ and $Y_0^{h,\lambda}\coloneqq \E[\mathcal{Y}_0^{h,\lambda}]$. Then, $|Y_0-Y_0^{h,\lambda}|+\mathrm{Var}(\mathcal{Y}_0^{h,\lambda})\to0$ as $h\to0$, and if $\alpha>0$, there exists a constant $C$, independent of $h$, such that
\begin{equation*}
    \big|
      Y_0-Y_0^{h,\lambda}
    \big|
    +
    \mathrm{Var}
    \big(\mathcal{Y}_0^{h,\lambda}\big)
    \leq C h^\alpha.
\end{equation*}
\begin{proof}
We start with the proof for the variance and use both the sequences $(\zeta_{0,h}^*)_{h\in(0,1)}$ and $(\zeta_{\lambda,h})_{h\in(0,1)}$. The proof relies on a squeezing argument. Adding and subtracting terms and using the triangle inequality, yields
\begin{align}\label{eq:var1}
    \lambda
    \mathrm{Var}(\mathcal{Y}_0^{h,\lambda})
    \leq
    \big|
      Y_0
      -
      Y_0^{h,\lambda}
      -
      \lambda\mathrm{Var}\big(\mathcal{Y}_0^{h,\lambda}\big)
    \big|
    +
    \big|
      Y_0-Y_0^{h,0}
    \big|
    +
    \big|
      Y_0^{h,0}-Y_0^{h,\lambda}
    \big|.
\end{align}
From the assumption $\mathrm{Var}(\mathcal{Y}_0)=0$, it holds that $\Phi^*=Y_0$ and Theorem~\ref{thm_Y0} gives
\begin{align}\label{eq:var2}
    \big|
      Y_0
      -
      Y_0^{h,\lambda}
      -
      \lambda\mathrm{Var}\big(\mathcal{Y}_0^{h,\lambda}\big)
    \big|
    +
    \big|
      Y_0-Y_0^{h,0}
    \big|
    =
    \big|
      \Phi^*
      -
      \Phi_{\lambda,h}^*
    \big|
    +
    \big|
      \Phi^*-\Phi_{0,h}^*
    \big|
    \leq C h^\alpha.
\end{align}

For the third term on the right-hand side of \eqref{eq:var1}, we first notice that, since $\zeta_{0,h}$ is a minimizer of $\zeta\mapsto Y_0^{h,0}$, it holds that
$      
      Y_0^{h,\lambda}
      -
      Y_0^{h,0}
      \geq0
$. This fact, adding $\lambda\mathrm{Var}(\mathcal{Y}_0^{h,\lambda})$, adding and subtracting $Y_0$, using the triangle inequality and using \eqref{eq:var2}, gives us
\begin{equation}\label{eq:var3}
\begin{split}
    \big|
      Y_0^{h,0}
      -
      Y_0^{h,\lambda}
    \big|
    &=
      Y_0^{h,\lambda}
      -
      Y_0^{h,0}
      \leq
      Y_0^{h,\lambda}
      +
      \lambda\mathrm{Var}
      \big(\mathcal{Y}_0^{h,\lambda}\big)
      -
      Y_0
      -
      Y_0^{h,0}
      +
      Y_0\\
    &\leq
    \big|
      Y_0
      -
      Y_0^{h,\lambda}
      -
      \lambda\mathrm{Var}
      \big(\mathcal{Y}_0^{h,\lambda}\big)
    \big|
    +
    \big|
      Y_0
      -
      Y_0^{h,\lambda}
    \big|
    \leq C h^\alpha.
\end{split}
\end{equation}
Now, \eqref{eq:var1}--\eqref{eq:var3} complete the proof for the variance. For the convergence of $Y_0^{h,\lambda}$, we conclude
\begin{align*}
  \big|Y_0-Y_0^{h,\lambda}\big|
  \leq
  \big|Y_0-Y_0^{h,\lambda}-\lambda\mathrm{Var}
    \big(\mathcal{Y}_0^{h,\lambda}\big)
  \big|
    +
    \lambda\mathrm{Var}
    \big(\mathcal{Y}_0^{h,\lambda}\big)
  \leq
  C h^\alpha.
\end{align*}
\end{proof}
\end{theorem}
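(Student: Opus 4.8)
The plan is to obtain both assertions purely from Theorem~\ref{thm_Y0}, applied twice --- once with the given penalty parameter $\lambda$ and once with $\lambda=0$ --- together with the zero-variance property of the continuous minimizer supplied by Assumption~\ref{as:Y0}. No new convergence analysis of processes is needed; everything is an algebraic squeezing argument on scalar quantities.

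First I would unpack the two optimal objectives. Since $\mathrm{Var}(\mathcal{Y}_0^{\zeta^*})=0$, we have $\Phi^*=\Phi_\lambda(\zeta^*)=\E[\mathcal{Y}_0^{\zeta^*}]=Y_0$ for every $\lambda\geq0$, so $\Phi^*$ is $\lambda$-independent and equals $Y_0$. On the discrete side, $\Phi_{\lambda,h}^*=\E[\mathcal{Y}_0^{h,\lambda}]+\lambda\mathrm{Var}(\mathcal{Y}_0^{h,\lambda})=Y_0^{h,\lambda}+\lambda\mathrm{Var}(\mathcal{Y}_0^{h,\lambda})$, and writing $Y_0^{h,0}\coloneqq\E[\mathcal{Y}_0^{h,\zeta^*_{0,h}}]$ we have $\Phi_{0,h}^*=Y_0^{h,0}$. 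Thus Theorem~\ref{thm_Y0} with parameters $\lambda$ and $0$ gives a constant $C$, independent of $h$, with
\begin{align*}
  \big|Y_0-Y_0^{h,\lambda}-\lambda\mathrm{Var}(\mathcal{Y}_0^{h,\lambda})\big|\leq Ch^\alpha,
  \qquad
  \big|Y_0-Y_0^{h,0}\big|\leq Ch^\alpha,
\end{align*}
for $\alpha>0$, and both left-hand sides tending to $0$ when $\alpha=0$.

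The hard part, and the reason the two estimates above are not yet enough, is that the first one controls only the \emph{combination} of $Y_0-Y_0^{h,\lambda}$ and $\mathrm{Var}(\mathcal{Y}_0^{h,\lambda})$: the variance is nonnegative but $Y_0-Y_0^{h,\lambda}$ carries no a priori sign, so one cannot simply split the inequality. To break this tie I would exploit that $\zeta^*_{0,h}$ is, by Assumption~\ref{as:Y0}, a global minimizer over $\mathcal Z_h$ of $\zeta\mapsto\E[\mathcal{Y}_0^{h,\zeta}]$, while $\zeta^*_{\lambda,h}\in\mathcal Z_h$ is admissible for that same problem; hence $Y_0^{h,0}\leq Y_0^{h,\lambda}$. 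Combined with the second displayed bound this yields $Y_0-Y_0^{h,\lambda}\leq Y_0-Y_0^{h,0}\leq Ch^\alpha$, which fed back into the first bound gives $\lambda\,\mathrm{Var}(\mathcal{Y}_0^{h,\lambda})\leq (Y_0-Y_0^{h,\lambda})+Ch^\alpha\leq 2Ch^\alpha$, i.e. the claimed rate for the variance (with $C$ now depending on $\lambda$). Finally $|Y_0-Y_0^{h,\lambda}|\leq |Y_0-Y_0^{h,\lambda}-\lambda\mathrm{Var}(\mathcal{Y}_0^{h,\lambda})|+\lambda\mathrm{Var}(\mathcal{Y}_0^{h,\lambda})\leq 3Ch^\alpha$, completing the rate statement; for $\alpha=0$ the identical chain of inequalities runs with each occurrence of $Ch^\alpha$ replaced by the null sequences from Theorem~\ref{thm_Y0}, giving convergence without a rate. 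I expect the only genuinely subtle point to be the correct use of the $\lambda=0$ minimality to pin the sign of $Y_0-Y_0^{h,\lambda}$; the rest is bookkeeping.
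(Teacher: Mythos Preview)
Your proof is correct and follows essentially the same approach as the paper: both apply Theorem~\ref{thm_Y0} twice (once with the given $\lambda$ and once with $\lambda=0$), use $\mathrm{Var}(\mathcal{Y}_0^{\zeta^*})=0$ to identify $\Phi^*=Y_0$, and then exploit the minimality of $\zeta^*_{0,h}$ to obtain the crucial sign $Y_0^{h,0}\leq Y_0^{h,\lambda}$, after which the variance and the initial-value error are separated by a triangle-inequality argument. Your presentation is in fact slightly more streamlined than the paper's (which routes through an extra intermediate bound on $|Y_0^{h,0}-Y_0^{h,\lambda}|$), but the ingredients and logic are the same.
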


From \eqref{eq:Var_identity} and Theorem~\ref{thm_Y02}, we directly get convergence in the terminal value and we can also conclude strong convergence of $y_0^{h,\lambda}$. This is stated in the following two corollaries.
\begin{corollary}\label{thm_Y03}
Suppose the setting of Subsection~\ref{sec:setting}, let Assumption~\ref{as:Y0} hold and $\lambda>0$. Then, $\E[
      (
      g(X_N^{h,\lambda})
      -
      Y_N^{h,\lambda}
      )^2
    ]\to0$ as $h\to0$, and if $\alpha>0$, then there exists a constant $C$, independent of $h$, such that
\begin{equation*}
    \Big(
    \E\Big[
      \big(
      g\big(X_N^{h,\lambda}\big)
      -
      Y_N^{h,\lambda}
      \big)^2
    \Big]
    \Big)^\frac12
    \leq C h^\frac\alpha2.
\end{equation*}
\end{corollary}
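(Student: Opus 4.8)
The plan is to establish the discrete analogue of the variance identity \eqref{eq:Var_identity} and then invoke Theorem~\ref{thm_Y02}. First I would observe that for any admissible discrete Markov map $\zeta$, adding the two defining relations in \eqref{var_discrete_FBSDE} for $Y_N^{h,\zeta}$ and for $\mathcal{Y}_0^{h,\zeta}$ makes the drift sums $\sum_{k=0}^{N-1}f(t_k,X_k^{h,\zeta},Z_k^{h,\zeta})h$ and the stochastic sums $\sum_{k=0}^{N-1}\langle Z_k^{h,\zeta},\Delta W_k\rangle$ cancel exactly, leaving
\begin{equation*}
  Y_N^{h,\zeta}-g\big(X_N^{h,\zeta}\big)
  =
  \E\big[\mathcal{Y}_0^{h,\zeta}\big]-\mathcal{Y}_0^{h,\zeta}.
\end{equation*}
Squaring and taking expectations yields $\E\big[\big(g(X_N^{h,\zeta})-Y_N^{h,\zeta}\big)^2\big]=\mathrm{Var}\big(\mathcal{Y}_0^{h,\zeta}\big)$, exactly mirroring \eqref{eq:Var_identity} but now without $Y_0$ being a free parameter.

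Next I would specialize this identity to the discrete minimizer $\zeta=\zeta_{\lambda,h}^*$, so that $\E\big[\big(g(X_N^{h,\lambda})-Y_N^{h,\lambda}\big)^2\big]=\mathrm{Var}\big(\mathcal{Y}_0^{h,\lambda}\big)$. Then Theorem~\ref{thm_Y02} directly gives $\mathrm{Var}\big(\mathcal{Y}_0^{h,\lambda}\big)\to0$ as $h\to0$, and $\mathrm{Var}\big(\mathcal{Y}_0^{h,\lambda}\big)\leq Ch^\alpha$ when $\alpha>0$ with $C$ independent of $h$. Taking square roots produces the claimed bound with rate $h^{\alpha/2}$ and the convergence statement for $\alpha=0$.

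I do not expect a genuine obstacle here: the result is an immediate corollary once the discrete variance identity is in place. The only point requiring a small amount of care is verifying that the cancellation of the drift and martingale sums is exact (it is, since $Y_N^{h,\zeta}$ and $\mathcal{Y}_0^{h,\zeta}$ in \eqref{var_discrete_FBSDE} are built from the \emph{same} discrete sums over the \emph{same} grid), and noting that the constant $C$ in Corollary~\ref{thm_Y03} is the square root of the one in Theorem~\ref{thm_Y02}.
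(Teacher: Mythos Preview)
Your proposal is correct and follows exactly the route the paper indicates: the corollary is stated without proof, with the remark that it follows directly from the variance identity \eqref{eq:Var_identity} and Theorem~\ref{thm_Y02}. You have simply made explicit the discrete version of that identity, $\E\big[(g(X_N^{h,\zeta})-Y_N^{h,\zeta})^2\big]=\mathrm{Var}(\mathcal{Y}_0^{h,\zeta})$, and then applied Theorem~\ref{thm_Y02}, which is precisely what the authors intend.
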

\begin{corollary}\label{thm_Y04}
Suppose the setting of Subsection~\ref{sec:setting}, let Assumption~\ref{as:Y0} hold and $\lambda>0$. Then, $(\E[
      (
      \mathcal{Y}_0
      -
      \mathcal{Y}_0^{h,\lambda}
      )^2
    ])^\frac12\to0$ as $h\to0$, and if $\alpha>0$, then there exists a constant $C$, independent of $h$, such that
\begin{equation*}
    \Big(
    \E\Big[
      \big(
      \mathcal{Y}_0
      -
      \mathcal{Y}_0^{h,\lambda}
      \big)^2
    \Big]
    \Big)^\frac12
    \leq C h^\frac\alpha2.
\end{equation*}
\begin{proof}
From Theorem~\ref{thm_Y02} and the triangle inequality, it holds
\begin{align*}
\Big(
    \E\Big[
      \big(
      \mathcal{Y}_0
      -
      \mathcal{Y}_0^{h,\lambda}
      \big)^2
    \Big]
    \Big)^\frac12
    &\leq
    \Big(
    \E\Big[
      \big(
      Y_0-Y_0^{h,\lambda}
      +      
      \mathcal{Y}_0
      -
      \mathcal{Y}_0^{h,\lambda}
      \big)^2
    \Big]
    \Big)^\frac12
    +
    \big|
      Y_0-Y_0^{h,\lambda}
    \big|\\
    &=
    \Big(
      \mathrm{Var}
      \big(\mathcal{Y}_0^{h,\lambda}\big)
    \Big)^\frac12
    +
    \big|
      Y_0-Y_0^{h,\lambda}
    \big|
    \leq C h^\frac\alpha2.
\end{align*}
This proves the corollary.
\end{proof}
\end{corollary}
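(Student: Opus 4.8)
The plan is to deduce this estimate directly from Theorem~\ref{thm_Y02} together with the variance identity \eqref{eq:Var_identity}, so that essentially no new analytic work is required. First I would record the structural fact coming from Assumption~\ref{as:Y0}: since $\mathrm{Var}(\mathcal{Y}_0^{\zeta^*})=0$, the exact stochastic cost $\mathcal{Y}_0=\mathcal{Y}_0^{\zeta^*}$ is $\P$-almost surely equal to the constant $\E[\mathcal{Y}_0]=\Phi^*=Y_0$. Hence $\mathcal{Y}_0-\mathcal{Y}_0^{h,\lambda}=Y_0-\mathcal{Y}_0^{h,\lambda}$ almost surely, and I would split this as $(Y_0-Y_0^{h,\lambda})-(\mathcal{Y}_0^{h,\lambda}-Y_0^{h,\lambda})$, where $Y_0^{h,\lambda}=\E[\mathcal{Y}_0^{h,\lambda}]$ is a constant.

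Next I would apply the triangle inequality in $L^2(\Omega)$ to obtain $(\E[(\mathcal{Y}_0-\mathcal{Y}_0^{h,\lambda})^2])^{1/2}\le |Y_0-Y_0^{h,\lambda}|+(\E[(\mathcal{Y}_0^{h,\lambda}-Y_0^{h,\lambda})^2])^{1/2}$, using that $|Y_0-Y_0^{h,\lambda}|$ is the $L^2$-norm of a constant and that, by definition of $Y_0^{h,\lambda}$ as the mean, $\E[(\mathcal{Y}_0^{h,\lambda}-Y_0^{h,\lambda})^2]=\mathrm{Var}(\mathcal{Y}_0^{h,\lambda})$. Theorem~\ref{thm_Y02} then controls both pieces: for $\alpha>0$ it gives $|Y_0-Y_0^{h,\lambda}|\le Ch^\alpha$ and $\mathrm{Var}(\mathcal{Y}_0^{h,\lambda})\le Ch^\alpha$, so the square root of the variance contributes the rate $h^{\alpha/2}$, which dominates $h^\alpha$ for $h$ small and $\alpha\in(0,1]$; this yields the asserted bound $Ch^{\alpha/2}$. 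For $\alpha=0$ the same two convergences from Theorem~\ref{thm_Y02} force the right-hand side to $0$, giving convergence without rate.

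I do not expect a genuine obstacle here, since Lemma~\ref{weak_conv_value_fun} and Theorems~\ref{thm_Y0}--\ref{thm_Y02} already absorb all the difficulty. The one point to be careful about is the bookkeeping around \eqref{eq:Var_identity}: one must use that, under Assumption~\ref{as:Y0}, $\mathcal{Y}_0$ is deterministic, so the term $\E[(\mathcal{Y}_0-Y_0)^2]$ vanishes and the whole discretization error collapses onto the two quantities estimated in Theorem~\ref{thm_Y02}. Equivalently, one may keep the symmetric splitting $\mathcal{Y}_0-\mathcal{Y}_0^{h,\lambda}=(Y_0-Y_0^{h,\lambda})+(\mathcal{Y}_0-Y_0)-(\mathcal{Y}_0^{h,\lambda}-Y_0^{h,\lambda})$ and invoke \eqref{eq:Var_identity} to identify the $L^2$-norm of the last bracket with $(\mathrm{Var}(\mathcal{Y}_0^{h,\lambda}))^{1/2}$ and that of the middle one with $0$; this is the same argument written without passing explicitly through the almost-sure constancy of $\mathcal{Y}_0$.
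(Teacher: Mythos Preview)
Your proposal is correct and follows essentially the same approach as the paper: both arguments use that $\mathcal{Y}_0=Y_0$ almost surely (from $\mathrm{Var}(\mathcal{Y}_0^{\zeta^*})=0$ in Assumption~\ref{as:Y0}), split $\mathcal{Y}_0-\mathcal{Y}_0^{h,\lambda}$ via the triangle inequality into the deterministic difference $|Y_0-Y_0^{h,\lambda}|$ and the centered fluctuation whose $L^2$-norm is $(\mathrm{Var}(\mathcal{Y}_0^{h,\lambda}))^{1/2}$, and then invoke Theorem~\ref{thm_Y02}. Your write-up is in fact slightly more explicit than the paper's about why the first step is legitimate, and your remark that the $h^{\alpha/2}$ term dominates is the correct reason the final rate is $\alpha/2$ rather than $\alpha$.
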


\subsection{Time discretization error of the FBSDE} \label{sec:strong}
While the error analysis for the initial and terminal values in the previous subsection required only Assumption~\ref{as:Y0}, the strong error analysis of $(X,Y,Z)$ requires more. In Theorem~\ref{thm_strong}, we prove strong convergence for small horizon $T$. This should be compared with the convergence result in \cite{han2020convergence}, that also has a very restrictive assumption on $T$. Compared to \cite{han2020convergence}, whose bound contains the error in the terminal value, we have no such term. In Proposition~\ref{prop_strong}, we prove the strong convergence without a restriction on $T$ with the cost of having a bound containing the error between the Markov maps $\zeta^*$ and $\zeta_{\lambda,h}^*$.

\begin{theorem}\label{thm_strong}
Suppose the setting of Subsection~\ref{sec:setting}, let Assumption~\ref{as:Y0} hold, $\alpha,\lambda>0$, and 
\begin{align*}
  \max\Bigg(
    T^\frac12|f|_{0,0,1}, 
    \frac{5T(T|f|_{0,1,0}+|g|_1)
    |b|_{0,0,1}^2
    \exp(
      5T
      (
        |b|_{0,1,0}T+|\sigma|_{0,1}
      )      
    )}{1-T^\frac12|f|_{0,0,1}}
    \Bigg)<1.
\end{align*}
Then, there exists a constant $C$, independent of $h$, such that
\begin{align*}
    \big\|
      X-\hat{X}^{h,\lambda}
    \big\|_{\mathcal{S}^2(\R^d)}
    +
    \big\|
      Y-\hat{Y}^{h,\lambda}
    \big\|_{\mathcal{S}^2(\R^d)}
    +
    \big\|
      Z-\hat{Z}^{h,\lambda}
    \big\|_{\mathcal{H}^2(\R^k)}
    \leq
    C h^{\frac\alpha2}.
\end{align*}

\begin{proof}
We start by noting that for $t\in[0,T]$, it holds
\begin{align}\label{eq:BSDEt0}
    Y_t - Y_t^{h,\lambda}
    =
    Y_0 - Y_0^{h,\lambda}
    -
    \int_0^t 
    \big(
      f(s,X_s,Z_s)
      -
      f(\bar s,\hat{X}^{h,\lambda}_s,\hat{Z}_s^{h,\lambda})
    \big)
    \d s
    +
    \int_0^t
      \big(
        Z_s-\hat{Z}_s^{h,\lambda}
      \big)
    \d W_s.
\end{align}
By the It\^o Isometry, substitution of \eqref{eq:BSDEt0} with $t=T$, and the triangle inequalities, we have
\begin{align*}
    &\big\|
      Z-\hat{Z}^{h,\lambda}
    \big\|_{\mathcal{H}^2(\R^k)}
    =
    \Big(\E
    \Big[
      \int_0^T
        \|Z_t-\hat{Z}_t^{h,\lambda}\|^2
      \d t
    \Big]
    \Big)^\frac12
    =
    \Big(
    \E
    \Big[
      \Big(
      \int_0^T
        (Z_t-\hat{Z}_t^{h,\lambda})
      \d W_t
      \Big)^2
    \Big]
    \Big)^\frac12\\
    &\quad
    =
    \Bigg(
    \E
    \Bigg[
      \Bigg(
      Y_0 - Y_0^{h,\lambda}
      +
      g(X_T)-Y^{h,\lambda}_N
      -
      \int_0^T 
      \big(
        f(t,X_t,Z_t)
        -
        f(\bar t,\hat{X}^{h,\lambda}_t,\hat{Z}_t^{h,\lambda})
      \big)
      \d t
    \Bigg)^2
    \Bigg]
    \Bigg)^\frac12\\
        &\quad
    \leq
    \big|
      Y_0-Y_0^{h,\lambda}
    \big|
    +
      \big(
        \E\big[
          \big(
            g(X_T)-g(X^{h,\lambda}_N)
          \big)^2
        \big]
      \big)^\frac12
    +  
      \big(
        \E\big[
          \big(
            g(X^{h,\lambda}_N)-Y_N^{h,\lambda}
          \big)^2
        \big]
      \big)^\frac12\\      
    &\qquad
    +
    \Big(
      \E
      \Big[
      \Big(
      \int_0^T 
      \big(
        f(t,X_t,Z_t)
        -
        f(\bar t,\hat{X}^{h,\lambda}_t,\hat{Z}_t^{h,\lambda})
      \big)
      \d t
      \Big)^2
      \Big]
      \Big)^\frac12.
\end{align*}
The first three terms are by Theorem~\ref{thm_Y02} and Corollary~\ref{thm_Y03}, bounded from below by $Ch^{\frac\alpha2}+|g|_1\|X-\hat{X}^{h,\lambda}\|_{\mathcal{S}^2(\R^d)}$. By similar arguments as those for $I_2,I_3$ in the proof of Lemma~\ref{weak_conv_value_fun}, it holds
\begin{equation}\label{eq:ff}
\begin{split}
    &\Big(
      \E
      \Big[
      \Big(
      \int_0^T 
      \big(
        f( t,X_t,Z_t)
        -
        f(\bar t,\hat{X}^{h,\lambda}_t,\hat{Z}_t^{h,\lambda})
      \big)
      \d t
      \Big)^2
      \Big]
    \Big)^\frac12\\
    &\quad
    \leq
    T|f|_{\alpha,0,0}(1+\alpha)^{-1}h^\alpha+T|f|_{0,1,0}\|X-\hat{X}^{h,\lambda}\|_{\mathcal{S}^2(\R^d)}
    +
    T^\frac12
    |f|_{0,0,1}
    \big\|
      \hat{Z}^{h,\lambda}-Z
    \big\|_{\mathcal{H}^2(\R^k)}.
\end{split}
\end{equation}
By a kickback argument and by assumption, it holds
\begin{align}\label{eq:ZX}
    \big\|
      Z-\hat{Z}^{h,\lambda}
    \big\|_{\mathcal{H}^2(\R^k)}
    \leq
    C h^\alpha+\frac{T|f|_{0,1,0}+|g|_1}{1-T^\frac12|f|_{0,0,1}}\|X-\hat{X}^{h,\lambda}\|_{\mathcal{S}^2(\R^d)}.
\end{align}

We next approach the error $\|X-\hat{X}^{h,\lambda}\|_{\mathcal{S}^2(\R^d)}$. Let $\Xi^{h,\lambda}\in\mathcal{S}^2(\R^d)$, $h\in(0,1)$, be the family of stochastic processes that for all $t\in[0,T]$, $\P$-a.s., satisfy
\begin{align*}
    \Xi_t^{h,\lambda}
    =
    x_0
    +
    \int_0^t
      b(\bar s,\Xi^{h,\lambda}_s,Z_s^{h,\lambda})
    \d s + \int_0^t\sigma(\bar s,\Xi_s^{h,\lambda})\d W_s.
\end{align*}
Using the triangle inequality, we get
\begin{align*}
    &\|X-X^{h,\lambda}\|_{\mathcal{S}^2(\R^d)}
    \leq
    \|X-\Xi^{h,\lambda}\|_{\mathcal{S}^2(\R^d)}
    +
    \|\Xi^{h,\lambda}-\hat{X}^{h,\lambda}\|_{\mathcal{S}^2(\R^d)}.
\end{align*}
By the arguments used repeatedly in the proof of Lemma~\ref{lemma_strong_conv} and by assumptions, it holds
\begin{align*}
    \E\big[\|X_t-&\Xi^{h,\lambda}_t\|^2\big]\\
    &\leq
    C
    h^\alpha
    +
    5|b|_{0,0,1}^2T
    \|Z-Z^{h,\lambda}\|^2_{\mathcal{H}^2(\R^k)}
    +
    5\big(
      |b|_{0,1,0}T+|\sigma|_{0,1}
    \big)
    \int_0^t
      \E\big[\|X_s-\Xi^{h,\lambda}_s\|^2\big]
    \d s.
\end{align*}
From this, it follows by the Gronwall lemma that 
\begin{align*}
    \E\big[\|X_t-\Xi^{h,\lambda}_t\|^2\big]
    \leq
    \exp
    \Big(
      5T
      \big(
        |b|_{0,1,0}T+|\sigma|_{0,1}
      \big)      
    \Big)
    \Big(
      C
      h^\alpha
      +
      5|b|_{0,0,1}^2T
      \|Z-Z^{h,\lambda}\|^2_{\mathcal{H}^2(\R^k)}
    \Big).
\end{align*}
A use of Lemma~\ref{lemma_strong_conv} yields $\|\Xi^{h,\lambda}-\hat{X}^{h,\lambda}\|_{\mathcal{S}^2(\R^d)}\leq Ch^\alpha$. We conclude that
\begin{align}\label{eq:XZ}
    &\|X-\hat{X}^{h,\lambda}\|_{\mathcal{S}^2(\R^d)}
    \leq
    Ch^\alpha
    +
    5T
    |b|_{0,0,1}^2
    \exp(
      5T
      (
        |b|_{0,1,0}T+|\sigma|_{0,1}
      )      
    )
    \|Z-\hat{Z}^{h,\lambda}\|_{\mathcal{H}^2(\R^k)}.
\end{align}
Using \eqref{eq:XZ} in \eqref{eq:ZX} gives, after a kickback argument, the desired bound 
$\|Z-\hat{Z}^{h,\lambda}\|_{\mathcal{H}^2(\R^k)}\leq Ch^\alpha$.

For $\|Y-\hat{Y}^{h,\lambda}\|_{\mathcal{S}^2(\R^d)}$, we use \eqref{eq:BSDEt0} and the standard arguments to get
\begin{align*}
    \Big(\E
    \big[\big\|
      Y_t&-Y_t^{h,\lambda}
    \big\|^2\big]
    \Big)^\frac12\\
    &\leq
    \big|Y_0-Y_0^{h,\lambda}\big|
    +
    \Bigg(
    \int_0^T
      \E\Big[
        \big\|
        f(s,X_s,Z_s)
        -
        f(\bar s, \hat{X}_s^{h,\lambda},\hat{Z}_s^{h,\lambda})
        \big\|^2
      \Big]
    \d s
    \Bigg)^\frac12
    +
    \big\|
        Z_s-\hat{Z}_s^{h,\lambda}
    \big\|_{\mathcal{H}^2(\R^k)}.
\end{align*}
Applying Theorem~\ref{thm_Y0}, \eqref{eq:ff} and the obtained results for $\|X-\hat{X}^{h,\lambda}\|_{\mathcal{S}^2(\R^d)}$ and $\|Z-\hat{Z}^{h,\lambda}\|_{\mathcal{H}^2(\R^k)}$ complete the proof.
\end{proof}
\end{theorem}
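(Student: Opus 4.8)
The plan is to reduce the three strong errors to quantities whose rates are already available --- $|Y_0-Y_0^{h,\lambda}|$ and $\mathrm{Var}(\mathcal Y_0^{h,\lambda})$ from Theorem~\ref{thm_Y02}, $\E[(g(X_N^{h,\lambda})-Y_N^{h,\lambda})^2]$ from Corollary~\ref{thm_Y03}, and pure Euler--Maruyama stability/consistency from Lemma~\ref{lemma_strong_conv} --- and then to disentangle the forward--backward coupling by two successive kickback arguments, the first governed by $T^{1/2}|f|_{0,0,1}<1$ and the second by the second entry of the $\max$ in the hypothesis. A feature of this route is that it never compares $\zeta^*$ with $\zeta^*_{\lambda,h}$, so no Markov-map error enters the bound; that is the price paid instead in Proposition~\ref{prop_strong}.

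First I would write the increment of the backward component, $Y_t-\hat Y_t^{h,\lambda}=Y_0-Y_0^{h,\lambda}-\int_0^t(f(s,X_s,Z_s)-f(\bar s,\hat X_s^{h,\lambda},\hat Z_s^{h,\lambda}))\d s+\int_0^t\langle Z_s-\hat Z_s^{h,\lambda},\d W_s\rangle$, evaluate it at $t=T$ using $Y_T=g(X_T)$ and $\hat Y_T^{h,\lambda}=Y_N^{h,\lambda}$, and apply the It\^o isometry to the stochastic integral to extract $\|Z-\hat Z^{h,\lambda}\|_{\mathcal H^2(\R^\ell)}$. Splitting $g(X_T)-Y_N^{h,\lambda}$ as $(g(X_T)-g(X_N^{h,\lambda}))+(g(X_N^{h,\lambda})-Y_N^{h,\lambda})$ and decomposing the $f$-difference into a Lipschitz-in-$x$, a Lipschitz-in-$z$, and an $\alpha$-H\"older-in-time part exactly as for $I_2,I_3$ in the proof of Lemma~\ref{weak_conv_value_fun}, one obtains
\[
\|Z-\hat Z^{h,\lambda}\|_{\mathcal H^2(\R^\ell)}\le Ch^{\alpha/2}+\big(|g|_1+T|f|_{0,1,0}\big)\|X-\hat X^{h,\lambda}\|_{\mathcal S^2(\R^d)}+T^{1/2}|f|_{0,0,1}\|Z-\hat Z^{h,\lambda}\|_{\mathcal H^2(\R^\ell)},
\]
where the $h^{\alpha/2}$ comes from $|Y_0-Y_0^{h,\lambda}|$ and the terminal mean-square term via Theorem~\ref{thm_Y02} and Corollary~\ref{thm_Y03}. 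Since $T^{1/2}|f|_{0,0,1}<1$, a first kickback yields $\|Z-\hat Z^{h,\lambda}\|_{\mathcal H^2(\R^\ell)}\le Ch^{\alpha/2}+C'\|X-\hat X^{h,\lambda}\|_{\mathcal S^2(\R^d)}$ with $C'=(|g|_1+T|f|_{0,1,0})/(1-T^{1/2}|f|_{0,0,1})$.

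Next I would bound $\|X-\hat X^{h,\lambda}\|_{\mathcal S^2(\R^d)}$ by inserting the auxiliary process $\Xi^{h,\lambda}$ solving $\d\Xi_t^{h,\lambda}=b(\bar t,\Xi_t^{h,\lambda},\hat Z_t^{h,\lambda})\d t+\sigma(\bar t,\Xi_t^{h,\lambda})\d W_t$. The comparison $X-\Xi^{h,\lambda}$ is between two genuine continuous SDE that share the same $Z$-input up to the time-freezing $s\mapsto\bar s$, so a Gronwall estimate --- Lipschitz continuity in $x$ of $b$ and $\sigma$, $\alpha$-H\"older continuity in time, and a single $\|Z-\hat Z^{h,\lambda}\|_{\mathcal H^2(\R^\ell)}^2$-term from the $z$-argument of $b$ --- gives $\E[\|X_t-\Xi_t^{h,\lambda}\|^2]\le\exp(6T(|b|_{0,1,0}T+|\sigma|_{0,1}))(Ch^\alpha+6T|b|_{0,0,1}^2\|Z-\hat Z^{h,\lambda}\|_{\mathcal H^2(\R^\ell)}^2)$, while $\|\Xi^{h,\lambda}-\hat X^{h,\lambda}\|_{\mathcal S^2(\R^d)}\le Ch^\alpha$ is precisely the $\mathcal X^{1,h}$-versus-$\hat{\mathcal X}^{3,h}$ case of Lemma~\ref{lemma_strong_conv} with $a_h(t,\cdot)=b(\bar t,\cdot,\hat Z_t^{h,\lambda})$, whose $C^\beta$-seminorms are uniformly bounded in $h$ by the hypotheses on $b$ and on $\zeta^*_{\lambda,h}$. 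Combining gives $\|X-\hat X^{h,\lambda}\|_{\mathcal S^2(\R^d)}\le Ch^\alpha+C''\|Z-\hat Z^{h,\lambda}\|_{\mathcal H^2(\R^\ell)}$ with $C''=6T|b|_{0,0,1}^2\exp(6T(|b|_{0,1,0}T+|\sigma|_{0,1}))$; substituting into the previous inequality produces a self-improving estimate for $\|Z-\hat Z^{h,\lambda}\|_{\mathcal H^2(\R^\ell)}$ whose amplification factor is exactly $C'C''$, which is $<1$ by hypothesis, so a second kickback yields $\|Z-\hat Z^{h,\lambda}\|_{\mathcal H^2(\R^\ell)}\le Ch^{\alpha/2}$ and hence $\|X-\hat X^{h,\lambda}\|_{\mathcal S^2(\R^d)}\le Ch^{\alpha/2}$. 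Finally, feeding these two bounds, Theorem~\ref{thm_Y02}, and the $f$-difference estimate back into the backward-increment identity and controlling the stochastic integral by standard BSDE estimates gives $\|Y-\hat Y^{h,\lambda}\|_{\mathcal S^2(\R^d)}\le Ch^{\alpha/2}$.

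The hard part is the forward--backward coupling: because the drift of $X$ reads $b(s,X_s,Z_s)$ and $Z$ is pinned to the terminal behaviour of $Y$, the three error norms are mutually linearly coupled, and closing the loop forces both smallness conditions on $T$ --- the first to absorb the $z$-dependence of $f$ in the It\^o-isometry step, the second to absorb the product of the BSDE-to-$X$ amplification constant $C'$ and the $X$-to-$Z$ amplification constant $C''$. A secondary point to watch is that the global rate is only $h^{\alpha/2}$, not $h^{\alpha}$: every purely SDE-discretization contribution enters at rate $h^\alpha$, and the loss of a factor $h^{\alpha/2}$ is caused solely by the terminal mean-square term $(\E[(g(X_N^{h,\lambda})-Y_N^{h,\lambda})^2])^{1/2}$, which is the square root of a quantity of order $h^\alpha$ by Corollary~\ref{thm_Y03}.
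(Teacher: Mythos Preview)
Your proposal is correct and follows the paper's own proof essentially step for step: the backward-increment identity, the It\^o-isometry extraction of $\|Z-\hat Z^{h,\lambda}\|_{\mathcal H^2}$, the first kickback via $T^{1/2}|f|_{0,0,1}<1$, the auxiliary process $\Xi^{h,\lambda}$ with the Gronwall estimate and the Lemma~\ref{lemma_strong_conv} comparison, and the second kickback via the product $C'C''<1$ all match. Your identification of the $h^{\alpha/2}$ bottleneck as coming solely from the square root of the terminal variance term (Corollary~\ref{thm_Y03}) is also exactly the mechanism at work; the paper is in fact slightly sloppy in writing $Ch^\alpha$ in the intermediate display \eqref{eq:ZX}, but the final rate is $h^{\alpha/2}$ for the reason you give.

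One small wording issue: when you invoke Lemma~\ref{lemma_strong_conv} for $\|\Xi^{h,\lambda}-\hat X^{h,\lambda}\|_{\mathcal S^2}$, the drift $a_h(t,\cdot)=b(\bar t,\cdot,\hat Z_t^{h,\lambda})$ is a \emph{random} (adapted) coefficient, since $\hat Z_t^{h,\lambda}=\zeta^*_{\lambda,h}(\bar t,X^{h,\lambda}_{\bar t})$ depends on $\omega$ through $X^{h,\lambda}$ rather than through the state variable of $\Xi$. The Lipschitz constant in $x$ is then just $|b|_{0,1,0}$ (no $\zeta^*_{\lambda,h}$ regularity needed at this step), and the standard Euler--Maruyama arguments underlying Lemma~\ref{lemma_strong_conv} go through with such random but uniformly Lipschitz coefficients. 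This does not affect the validity of your argument, only the justification you give for the bounded seminorms.
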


\begin{proposition}\label{prop_strong}
Suppose the setting of Subsection~\ref{sec:setting}, let Assumption~\ref{as:Y0} hold, $\alpha,\lambda>0$. Then, there exists a constant $C$, independent of $h$, such that
\begin{align*}
    &\big\|
      X-\hat{X}^{h,\lambda}
    \big\|_{\mathcal{S}^2(\R^d)}
    +
    \big\|
      Y-\hat{Y}^{h,\lambda}
    \big\|_{\mathcal{S}^2(\R^d)}
    +
    \big\|
      Z-\hat{Z}^{h,\lambda}
    \big\|_{\mathcal{H}^2(\R^k)}\\
    &\qquad
    \leq C\Bigg(h^{\frac\alpha2}+
    \max_{0,\dots,N_h}
    \Big(
    \E
    \Big[
    \big\|
      \zeta^*(t_n,X_n^{\lambda,h})-\hat{\zeta}_{h,\lambda}^*(t_n,X_n^{\lambda,h})
    \big\|^2
    \Big]
    \Big)^\frac12
  \Bigg).
\end{align*}
\begin{proof}
This is proved similarly to Theorem~\ref{thm_strong} without the kick-back argument and instead of \eqref{eq:ff}, using 
\begin{align*}
    &\Big(
      \E
      \Big[
      \Big(
      \int_0^T 
      \big(
        f( t,X_t,Z_t)
        -
        f(\bar t,\hat{X}^{h}_t,\hat{Z}_t^{h})
      \big)
      \d t
      \Big)^2
      \Big]
    \Big)^\frac12\\
    &\quad
    \leq
    C\Big(
      h^{\frac\alpha2}
      +
      \|X-\hat{X}^{h,\lambda}\|_{\mathcal{S}^2(\R^d)}
      +
    \max_{0,\dots,N_h}
    \Big(
    \E
    \Big[
    \big\|
      \zeta^*(t_n,X_n^{\lambda,h})-\hat{\zeta}_{h,\lambda}^*(t_n,X_n^{\lambda,h})
    \big\|^2
    \Big]
    \Big)^\frac12
    \Big).
\end{align*}
\end{proof}
\end{proposition}

\subsection{A discussion on the full error analysis of the robust deep FBSDE method}\label{sec:full}

In Subsections~\ref{sec:Y0} and \ref{sec:strong}, only the time discretization error is considered, i.e., the error between \eqref{var_FBSDE} and \eqref{var_discrete_FBSDE}. For a full error analysis, the error between the fully implementable scheme \eqref{var_discrete_FBSDE_implementable} and \eqref{var_FBSDE} must be considered. Besides the time discretization error, there are three other sources of error: The first is the error induced by optimizing over the parameters of a neural network, instead of over the vast set $\mathcal{Z}_h$. By the Universal Approximation Theorem \cite{cybenko1989approximations}, this error can be made arbitrarily small, but this theorem gives no help with suggesting the network architecture that can guarantee a maximal error of desired size. The second error is the Monte-Carlo error induced from approximating the expectation in $Y_0$ by a sample mean. This error allows for a simple error analysis and the Monte Carlo error is of the order $\mathcal O(M_{\mathrm{batch}}^{-1/2}$). The final error is the error induced from the inexact optimization procedure of \eqref{var_discrete_FBSDE_implementable}. 

\section{Numerical experiments}\label{sec5}
In this section, we evaluate our algorithm on three different problems. The first two are of LQ type, for which we have access to a semi-analytic solution for comparison. The third example uses nonlinear terms, both in the drift and diffusion coefficients in the forward equation, and we no longer have access to a reference solution. In the first example, there is a one-to-one map between the feedback control and the $Z-$process,  and we can set $\lambda=0$ in the loss function. In the second and third examples, this is not the case, and $\lambda>0$ is necessary for uniqueness of the minimizer to our discrete problem, and in turn convergence to the continuous FBSDE.

In the experimental convergence studies, we approximate $\|\cdot\|_{\mathcal{S}(\R^q)}$ and $\|\cdot\|_{\mathcal{H}(\R^q)}$, with
\begin{equation*}
    \|A\|_{\mathcal{S}_{h,M}^2(\R^q)}=\max_{n\in\{0,1,\ldots,N\}}\bigg(\frac{1}{M}\sum_{m=1}^M\|A_n(m)\|^2\bigg)^{\frac12}, \quad \|A\|_{\mathcal{H}_{h,M}^2(\R^q)}=\frac{1}{N}\sum_{N=0}^{N-1}\bigg(\frac{1}{M}\sum_{m=1}^M\|A_n(m)\|^2\bigg)^{\frac12}.
\end{equation*}
Here, $A(m)=\{A_1(m),A_2(m),\cdot,A_N(m)\}$, $m=1,2,\ldots,M$, are \textit{i.i.d.} realizations of some adapted stochastic processes $A$ on the grid. The norm $\|\cdot\|_{L^2(\Omega;\R^q)}$
is approximated with a sample mean, denoted $\|\cdot\|_{L_{h,M}^2(\R^q)}$. For the convergence study, the Experimental Order of Convergence (EOC) is used. It is defined as
 \begin{equation*}
     \text{EOC}(h_{i})=\frac{\log{(\text{error}(h_{i+1}))} - \log{(\text{error}(h_i))}}{\log{(h_{i+1})}-\log{(h_i)}}.
 \end{equation*}
In all examples, we use the neural network architecture in Section~\ref{NN_spec}. We use $M_\text{train}=2^{22}$ training data points and batch size $M_\text{batch}=2^9$ with $K_{\mathrm{epoch}}=15$ epochs. This gives $K_{\mathrm{batch}}=2^{12}=4096$ updates per epoch.
For the optimization, the Adam optimizer \cite{kingma2014adam} is used with learning rate 0.1 for the first three epochs, which, after that, is multiplied by a factor of $\e^{-0.5}$ for each new epoch. For our use, it was important to choose $M_{\mathrm{train}}$ large, since in our empirical convergence results we want to  isolate the time discretization error. In practice, the method generates acceptable solutions with significantly smaller $M_\text{train}$.

\subsection{Linear quadratic control problems}\label{lqgcp_num}
Among all stochastic control problems, the LQ control problem is the most studied and that with the most structure, see \textit{e.g.,} \cite{aastrom2012introduction}. For our purposes,  it has a closed-form analytic solution, with which we can compare our numerical approximations. 

Let $k=d$, $x_0\in\R^d$, $A,\sigma,\in\R^{d\times d}$, $R_x,G\in\mathbbm{S}^d_+$,  $R_u\in\mathbbm{S}^\ell_+$ and $B\in\R^{d\times \ell}$ be of full rank and $C\in\R^d$. The state equation and cost functional of a linear-quadratic-Gaussian control problem are given by
\begin{equation*}\label{LQR_md}
   \begin{dcases}  X_t=x_0+\int_0^t\big(A(C-X_s) + Bu_s\big)\d s + \int_0^t\sigma\d W_s, \\
    J^u(t,x)=\E^{t,x}\Big[\int_t^T (\langle R_x X_s, X_s\rangle + \langle R_u u_s,u_s\rangle)\d s+\langle G X_T, X_T\rangle\Big],\quad t\in[0,T].
    \end{dcases}
\end{equation*}
With the minimizer $v^*$ of the corresponding Hamiltonian, $\inf_{u\in U}\{\langle\text{D}_x V, Bu\rangle+ \langle R_u u, u\rangle\}$, we have the optimal feedback control
\begin{equation}\label{opt_control}
    u^*_t=-\frac{1}{2}R_u^{-1}B^T\text{D}_x V(t,X_t).
\end{equation}
Here, we recall that $V$ is the solution to the associated HJB-equation. Its solution is given by
\begin{align*}
    V(t,x)
    =
    x^TP(t)x
    +
    x^TQ(t)
    +
    R(t),
\end{align*}
where $(P,Q,R)$ solves the system of ordinary differential equations,
\begin{equation*}\begin{dcases}
     \dot{P}(t)-A^TP(t)-P(t)A-P(t)BR_u^{-1}B^TP(t)+R_x=\mathbf{0}_{d\times d},\\
    \dot{Q}(t) +2P(t)AC-A^TQ(t)-P(t)BR_u^{-1}B^TQ(t)=\mathbf{0}_d,\\
    \dot{R}(t)+\text{Tr}\big\{\sigma\sigma^TP(t)\big\} + Q(t)^TAC-\frac{1}{4}Q(t)^TBR_u^{-1}B^TQ(t)=0,\quad t\in[0,T],\\
    P(T)=G;\quad Q(T)=\mathbf{0}_d; \quad R(T)=0.
    \end{dcases}
\end{equation*}
The first equation is a matrix Riccati equation, and we refer to the whole system, slightly inaccurately, as the Riccati equation. The gradient of $V$ satisfies $\text{D}_xV(t,x)=2P(t)x+Q(t)$. The related FBSDE reads:
\begin{equation}
    \begin{dcases}
    X_t =x_0+\int_0^t \big[A(C-X_s)-\frac{1}{2}BR_u^{-1}B^\top Z_s\big]\d s + \int_0^t\sigma\d W_s,\\
    Y_t =\langle GX_T,X_T\rangle -\int_t^T\big(\langle R_x X_s,X_s\rangle-\frac{1}{4}\langle R_u^{-1}B^\top Z_s, B^\top Z_s\rangle\big)\d s + \int_t^T\langle Z_s,\sigma \d W_s\rangle,\quad t\in[0,T].
    \end{dcases}\label{FBSDE_LQG}
\end{equation}
The solution to \eqref{FBSDE_LQG} is then given by \begin{equation}\label{eq:YZ}
    Y_t=X_t^TP(t)X_t+X_t^TQ(t) +R(t);\quad Z_t=2P(t)X_t + Q(t).
\end{equation}
The Riccati equation has an analytic solution in closed-form, only in one dimension. As benchmark solution in our experiments, we use the Euler approximation of the Riccati equation with $160\times 2^7$ time steps and with $160$ time steps for for $X$. The processes $(Y,Z)$ are approximated by \eqref{eq:YZ}.

\subsubsection{Example with state and control of the same dimension}\label{del}

Our first example concerns a two-dimensional LQ control problem with two-dimensional control. The matrices for the forward equation are given by
\begin{equation*}
    A=\begin{pmatrix}1&0\\0&2\end{pmatrix},\quad B=\begin{pmatrix}1&0.5\\-0.5&1\end{pmatrix},\quad
    C=\begin{pmatrix}0.1\\0.2\end{pmatrix},\quad
    \sigma=\begin{pmatrix}0.05&0.25\\ 0.05&
    0.25\end{pmatrix},\quad
    x_0=\begin{pmatrix}0.1\\0.1\end{pmatrix},\quad T=0.5,
\end{equation*}
and the penalty matrices for the control problem by
\begin{equation*}
    R_x=\begin{pmatrix}100&0\\0&1\end{pmatrix},\quad R_u=\begin{pmatrix}1&0\\0&1\end{pmatrix},\quad
    G=\begin{pmatrix}1&0\\0&100\end{pmatrix}.
\end{equation*}

In Figure \ref{XYZ}, the approximation of $(X,Y,Z)$ is compared to the analytic solution in mean, an empirical credible interval (again, defined as the area between the $5$:th and $95$:th percentiles at each time point) as well as for a single path. We see that the largest error comes from the approximation of $Y$. The reason for this is the error accumulation stemming from our time discretization. It is not due to the neural network approximation. This can be verified by using the baseline for $Z$, from the Riccati equation, and use an Euler-Maruyama scheme to generate the same error. This suggests that a more suitable choice of numerical schemes for $Y$ should be used.   
 
In Table \ref{table1}, we see the convergence rates from the experiment. The regime of the LQ control problem, with, e.g., quadratic dependence in $f$ does not satisfy the assumptions made in Section~\ref{sec4} and a direct comparison cannot be made. Still, we see, for instance, that $Y_0^h$ converges empirically with order $1$, while the error in the terminal condition reaches $0.69$ and is likely to continue to decrease. In Theorem~\ref{thm_Y02} and Corollary~\ref{thm_Y03}, there is a difference of a factor two between these two errors, which roughly appears to be in line with the rates obtained. 
 
\begin{figure}[htp]
\centering
\begin{tabular}{ccc}
          \includegraphics[width=53mm]{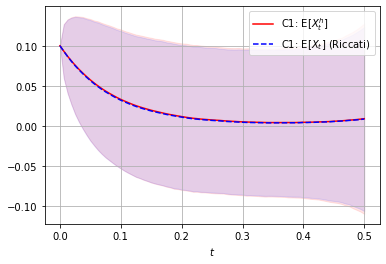}& \includegraphics[width=53mm]{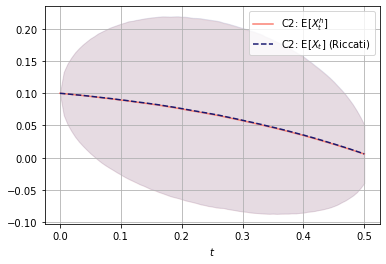}&  \includegraphics[width=53mm]{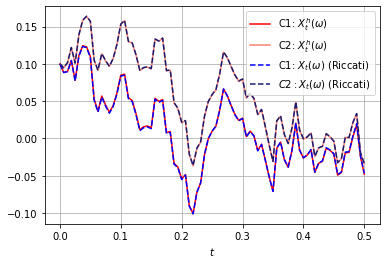}\\  \includegraphics[width=53mm]{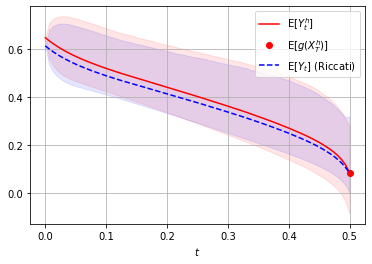}&&
          \includegraphics[width=53mm]{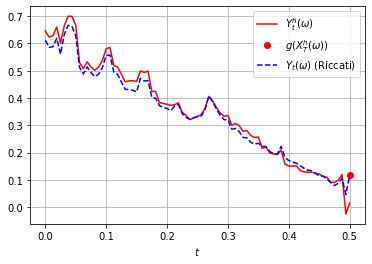} \\
          \includegraphics[width=53mm]{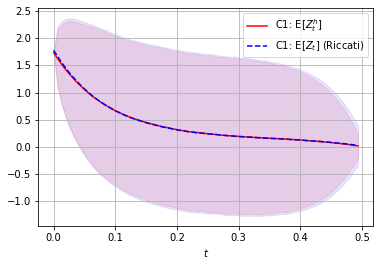}&  \includegraphics[width=53mm]{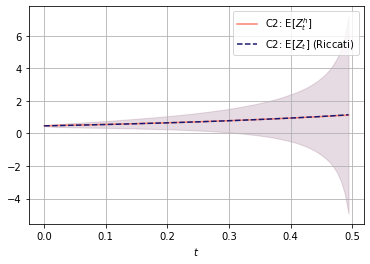}&
           \includegraphics[width=53mm]{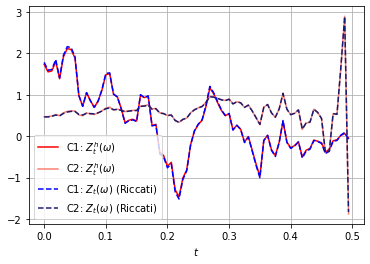}
\end{tabular}
\caption{Average of solutions and a single solution path compared to their analytic counterparts for the LQ control problem from Section~\ref{del}. The shaded areas represent empirical credible intervals, defined as the areas between the 5:th and the 95:th percentiles at each time point.}\label{XYZ}
\end{figure}

\begin{table}[]
\resizebox{\columnwidth}{!}{%
\begin{tabular}{llllllllllll}
\textbf{} & \multicolumn{2}{l}{ $\|X- X^h\|_{\mathcal{S}_{h}^2}$} & \multicolumn{2}{l}{ $\|Y- Y^h\|_{\mathcal{S}_h^2}$} & \multicolumn{2}{l}{ $\|Z- Z^h\|_{\mathcal{H}_h^2}$} & \multicolumn{2}{l}{$\|Y_N^h-g(X^h_N)\|_{L_h^2}$} & \multicolumn{2}{l}{ $|Y_0- Y_0^h|$} & $Y_0^h$ \\ \hline
         $N$ & Error & EOC & Error & EOC & Error & EOC & Error & EOC & Error & EOC & Value  \\ \hline
          \multicolumn{12}{c}{Problem 1 with $d=2$ and $\ell=2$ with analytic initial value $Y_0=0.6122$}.
         \\\hline 
         5 & 6.90e-2  & 1.28 & 1.22    & 1.03 & 5.66e-1 & 1.34 & 9.91e-1 &  0.98&7.10e-1& 1.13& 1.32 \\
         10 & 2.85e-2 & 1.11 & 6.02e-1 & 0.98 & 2.22e-1 & 1.16 & 5.04e-1 & 0.92 & 3.26e-1 & 1.16 & 0.937\\
         20 & 1.32e-2 & 1.09 & 3.05e-1 & 0.86 & 1.00e-1 & 1.08 & 2.67e-1 & 0.80 &1.47e-1& 1.04 & 0.759\\
         40 & 6.16e-3 & 1.00 & 1.68e-1 & 0.74 & 4.72e-2 & 0.98 & 1.53e-1 & 0.69 &7.01e-2& 1.01 & 0.683\\
         80 & 3.07e-3 &      & 1.01e-1 &      & 2.39e-2 &      & 9.46e-3 &      & 3.47e-2 &    & 0.645\\ \hline
        \multicolumn{12}{c}{Problem 2 with $d=6$ and $\ell=2$ with analytic initial value $Y_0=1.4599$.}
\\ \hline
5& 7.25e-2 & 1.23   & 5.65e-1 & 0.90  & 1.20    & 0.90  & 4.43e-1 & 0.80  & 3.51e-1 & 1.10  & 1.80\\ 
10& 3.10e-2 & 1.10 & 3.02e-1 & 0.76 & 7.41e-1 & 0.49 & 2.54e-1 & 0.69 & 1.63e-1 & 0.94 & 1.55\\ 

20& 1.45e-2 & 0.87 & 1.79e-1 & 0.63 & 5.26e-1 & 0.28 & 1.57e-1 & 0.60 & 8.51e-2 & 0.82 & 1.55\\ 
40& 7.96e-3 & 0.35 & 1.15e-1  & 0.53 & 4.34e-1 & 0.18 & 1.04e-1 & 0.54 & 4.81e-2 & 0.69 & 1.51\\ 
80& 6.24e-3 &      & 7.96e-2 &      & 3.84e-1 &      & 7.15e-2 &      & 3.00e-2 &      & 1.49\\ \hline
        \multicolumn{12}{c}{Problem 3 with $d=25$ and $\ell=1$ with analytic initial value $Y_0=11.348$.}
\\ \hline
5& 2.25e-1 & 1.86 & 1.93 & 0.68 & 3.40 & - & 1.29 & 0.50 & 1.43 & 0.99 & 12.78 \\ 
10& 6.19e-2 & 0.97 & 1.21 & 0.42 & 2.54 & - & 9.15e-1 & 0.31 & 0.72 & 0.47 & 12.07\\ 20& 5.66e-2 & 0.57 & 9.00e-1 & 0.29 & 2.73 & - & 7.40e-1 & 0.19 & 0.52 & 0.53 & 11.87\\ 
40& 2.48e-2 & 0.25 & 7.37e-1  & 0.069 & 2.72 & - & 6.47e-1 & 0.023 & 0.36 & 0.53 & 11.71\\ 
80& 2.09e-2 &      & 7.03e-1 &      & 3.06 &      & 6.37e-1 &      & 0.25 &      & 11.60\\ 

\end{tabular}}
\caption{Errors and experimental order of convergence for LQ control problems described in Sections \ref{del} and \ref{d6ell2}.}\label{table1}
\end{table}


\subsubsection{Example with control in lower dimensions than the state}\label{d6ell2}

Our second example concerns a six-dimensional problem with a two-dimensional control. The matrices used for the state equation are given by
 \begin{align*}
    A&=\text{diag}([1,2,3,1,2,3]),\quad B=\begin{pmatrix}1&-1\\1&1\\0.5&1\\1&-1\\0&-1\\0&1\end{pmatrix},\quad C=\text{diag}([-0.2,-0.1,0,0,0.1,0.2]),\\
    \sigma&=\text{diag}([0.05,0.25,0.05,0.25,0.05,0.25]),\quad x_0=(0.1,0.1,0.1,0.1,0.1,0.1)^\top,\quad T=0.5.
\end{align*}
The penalty matrices of the control problem are given by 
\begin{equation*}
        R_x=\text{diag}([25,1,25,1,25,1]),\quad R_u=\text{diag}([1,1]),\quad
    G=\text{diag}([1,25,1,25,1,25]).
\end{equation*}
Before we discuss our results, recall that the optimal feedback control at time $t$ is given by 
$
    u_t^* =-\frac{1}{2}R_u^{-1}B^TZ_t
$.
Since $u^*_t$ takes on values in $\R^\ell$ and $R_u^{-1}B^T$ is of rank $\ell<d$ at most, we can conclude that there exists infinitely many processes $\zeta_t$, such that $ u^*_t =-\frac{1}{2}R_u^{-1}B^T\zeta_t$. To obtain uniqueness of the control component, we set $\lambda=1>0$.


In Figure \ref{XYZd6ell2}, the approximations are compared with semi-analytic solutions in empirical mean, credible interval and for a representative path realization of $X,Y$ and $Z$. Visually, the approximations  capture $(X,Y,Z)$ well. The convergence is shown in the middle part of Table~\ref{table1} and we note that the experimental orders decrease below the orders of the previous example (top part of Table~\ref{table1}). To investigate whether this is the true convergence order, or if other errors are dominating for small time steps, we have done some hyperparameter optimization with different training data and batch sizes, learning rates and neural network architectures, without being able to improve these rates. 
\begin{figure}[htp]
\centering
\begin{tabular}{ccc}
          \includegraphics[width=80mm]{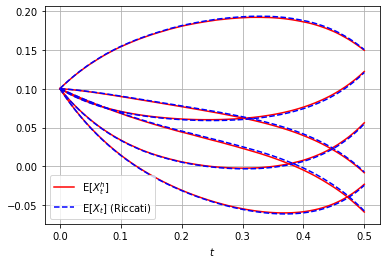}&  \includegraphics[width=80mm]{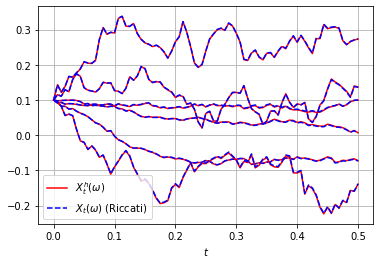}\\ \includegraphics[width=80mm]{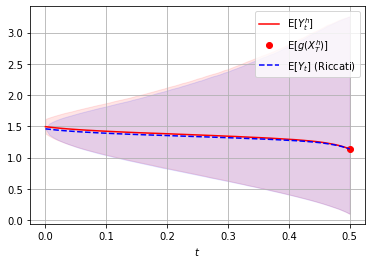}&
          \includegraphics[width=80mm]{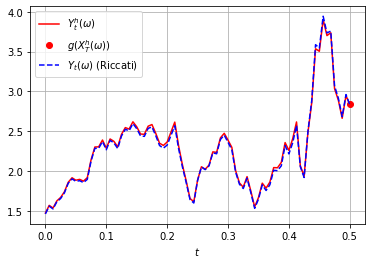}\\
          \includegraphics[width=80mm]{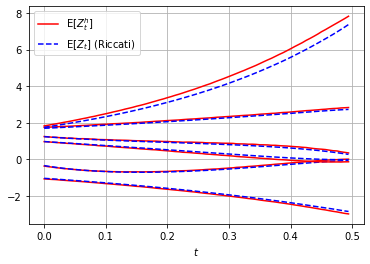}&   
         \includegraphics[width=80mm]{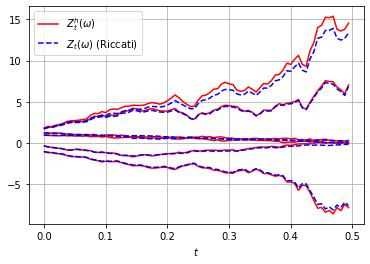}
\end{tabular}
\caption{Average of solutions and a single solution path compared to their analytic counterparts for the LQ control problem from Section~\ref{d6ell2}. The shaded area represents an empirical credible interval for $Y$, defined as the area between the 5:th and the 95:th percentiles at each time point. We do not include credible intervals for $X$ and $Z$ in this figure to facilitate visualization. For $X$ and $Z$, we see one realization of each of the six components.}\label{XYZd6ell2}
\end{figure}

The third example aims to demonstrate our methods' ability to deal with high-dimensional problems. Most high-dimensional PDE and BSDE problems in the literature are symmetric (solutions are permutation invariant), and in some cases the solutions can be represented by a one-dimensional BSDE \cite[Example 1]{kapllani2020deep}. From the parameters below, it becomes evident that the 25-dimensional problem that we choose is highly non-symmetric and therefore very challenging (arguably more challenging than a similar, but 100-dimensional symmetric problem). Non-symmetric problems in the literature are \cite{dai2021learning, ji2020three, liu2021deep, pereira2019learning, wang2022deep}, and the dimensions are 4, 5, 3, 4, and 2, respectively. A symmetric problem in 100-dimensions is found in \cite{ji2020three}. 

Despite the challenging nature of the problem and its relatively high-dimension, we achieve acceptable results, which is displayed in the bottom third of Table \ref{table1}. It should however be pointed out that the error source induced by the time discretization is no longer dominating. This means that we do not see a convergence with the number of time steps for the approximation of the $Z-$process. All the other discretization errors decreases with the step size, but it is clear that we have other significant error sources. Figure~\ref{XYZd25ell1} shows that visually the performance for 40 time steps is acceptable, even though some of the components of the $Z$ process oscillates close to the terminal time. The phenomena of accurate $X$ and $Y$ processes and less accuracy in some of the components of the $Z$ process could, at least heuristically, be explained by the mapping $\R^{25}\ni Z_t\mapsto u_t\in\R$. It is reasonable to assume that some of the components of the $Z$ process are more influential in the above mentioned mapping, which is what we have seen empirically in our experiments. Moreover, we have noticed that the components of the $Z$ process with the lowest magnitudes are less accurately approximated (relatively), which by the form of the feedback control, also justifies the above reasoning.

We use the following parameters:
$T=0.5$, $d=25$, $l=1$, $A=\text{diag}([1,2,3,\ldots,1,2,3,1])$,\\  $B=(1,1,0.5,1,0,0,1,1,0.5,1,0,0,1,1,0.5,1,0,0,1,1,0.5,1,0,0,1)$, $C=(-0.2,-0.1,0,0,0.1,0.2,-0.2,-0.1,0,\\0,0.1,0.2,-0.2,-0.1,0,0.,0.1,0.2,-0.2,-0.1,0,0,0.1,0.2,-0.2)$, $\sigma=\text{diag}([0.15,0.15,0.25,0.25,0.25,0.25,0.25,\\0.25,0.25,0.25,0.25,0.25,0.15,0.15,0.25,0.25,0.25,0.25,0.25,0.25,0.25,0.25,0.25,0.25,0.25]), R_x=\text{diag}([25,1,25,\\1,25,1,25,1,25,1,25,1,25,1,25,1,25,1,25,1,25,1,25,1,25])$, $R_u = 1$, $G=\text{diag}([25,25,25,25,25,25,1,25,1,25,1,25,25,25,25,25,25,25,1,25,\\1,25,1,25,1])$, $x_0=(0,1,0.1,\ldots,0.1)$. 
\begin{figure}[htp]
\centering
\begin{tabular}{ccc}
          \includegraphics[width=62mm]{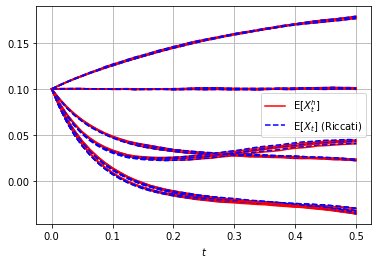}&  \includegraphics[width=62mm]{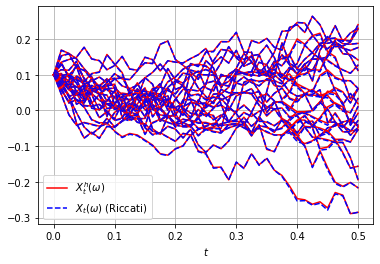}\\ \includegraphics[width=62mm]{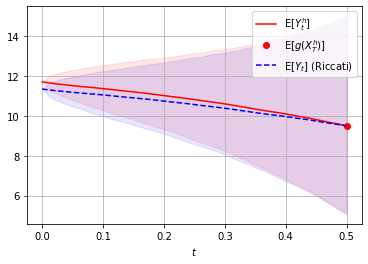}&
          \includegraphics[width=62mm]{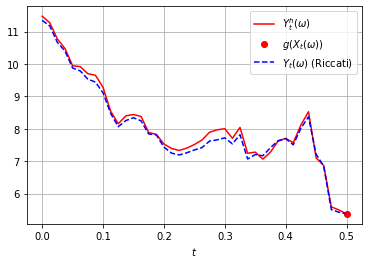}\\
          \includegraphics[width=62mm]{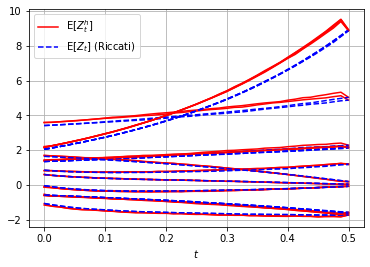}&   
         \includegraphics[width=62mm]{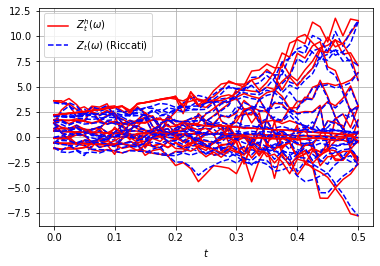}
\end{tabular}
\caption{Average of solutions and a single solution path compared to their analytic counterparts for the second LQ control problem from Section~\ref{d6ell2}, \textit{i.e.,} the problem with $d=25$ and $\ell=1$. The shaded area represents an empirical credible interval for $Y$, defined as the area between the 5:th and the 95:th percentiles at each time point. We do not include credible intervals for $X$ and $Z$ in this figure to facilitate visualization. For $X$ and $Z$, we see one realization of each of the 25 components.}\label{XYZd25ell1}
\end{figure}

\subsection{Non-linear quadratic control problems}\label{nonlinear}
Finally, we consider a control problem with non-linear coefficients in the state equation and quadratic coefficients in the cost functional. It has stable and unstable equilibrium points at the odd and even integers, respectively. The problem has been chosen to mimic the unstable problems that are commonly considered in control, such as inverted pendulums. 

Let $x_0\in\R^d$, $A,\Sigma,\in\R^{d\times d}$, $R_x,G\in\mathbbm{S}^d_+$,  $R_u\in\mathbbm{S}^\ell_+$ and $B\in\R^{d\times \ell}$ be of full rank and $C\in\R^d$. The state equation and cost functional of a non-linear quadratic control problem are given by
\begin{equation}\label{NLQR_md}
   \begin{dcases}  X_t=x_0+\int_0^t\big(A\sin(\pi CX_s) + Bu_s\big)\d s + \int_0^t\Sigma(\mathrm{1}_d+X_sX_s^\top)\d W_s, \\
    J^u(t,x)=\E^{t,x}\Big[\int_t^T (\langle R_x X_s, X_s\rangle + \langle R_u u_s,u_s\rangle)\d s+\langle G X_T, X_T\rangle\Big].
    \end{dcases}
\end{equation}
Due to the linear dependence of the control and the quadratic cost functional, the optimal feedback control is again given by
\begin{equation}\label{NLopt_control}
    u^*_t=-\frac{1}{2}R_u^{-1}B^T\text{D}_x V(t,X_t).
\end{equation}
Similar to above, $V$ is the solution to the associated HJB-equation. Again, by setting $Y_t=V(t,X_t)$ and $Z_t=\text{D}_xV(t,X_t)$, we obtain the FBSDE
\begin{equation}
    \begin{dcases}
     X_t = x_0+\int_0^t\big[A\sin(\pi CX_s)-\frac{1}{2}BR_u^{-1}R_u^{-1}B^\top Z_s\big]\d s + \int_0^t\Sigma(\mathrm{1}_d+X_sX_s^\top)\d W_s,\\
    Y_t =g(X_T) -\int_t^T\big(\langle R_x X_s,X_s\rangle-\frac{1}{4}\langle R_u^{-1}B^\top Z_s, B^\top Z_s\rangle\big)\d s + \int_0^t Z_s^\top\Sigma(\mathrm{1}_d+X_sX_s^\top) \d W_s.
    \end{dcases}\label{FBSDE_NLQG}
\end{equation}

Particularly, we consider a three-dimensional problem with control in two dimensions, \textit{i.e.,} $d=3$ and $\ell=2$ and use the following matrices for the state  \begin{align*}
A&=\text{diag}([1,1,1]),\quad B=\begin{pmatrix}1&0\\0&1\\1&1\end{pmatrix},\quad C=\text{diag}([1,1,1]),\\
    \Sigma&=\text{diag}([0.1,0.1,0.1]),\quad x_0=(0.1,0.1,0.1)^\top,\quad T=0.25.
\end{align*}
For the cost functional, we have the matrices
\begin{equation*}
        R_x=\text{diag}([5,1,1]),\quad R_u=\text{diag}([1,1]),\quad
    G=\text{diag}([1,5,1]).
\end{equation*}
Table \ref{table3} shows the  experimental order of convergence of the terminal condition and the initial value of the BSDE. The factor two between them is again consistent with Theorem~\ref{thm_Y02} and Corollary~\ref{thm_Y03}, even though the problem does not fall under the assumptions of these results.

\begin{table}[]
\center
\begin{tabular}{llllll}
\textbf{} &  \multicolumn{2}{l}{$\|Y_N^h-g(X^h_N)\|_{L^2(\Omega)}$} & \multicolumn{2}{l}{ $|Y_0-Y_0^h|$} & $Y_0^h$ \\ \hline
         $N$ &  Error & EOC & Error & EOC & Value  \\ \hline
         5 & 2.69e-2 & 0.59 &9.80e-3& 1.01 & 0.2297 \\
         10 &  1.79e-2 & 0.53 &4.85e-3&1.03& 0.2241\\
         20 &  1.24e-2 & 0.50 &2.38e-3&0.99& 0.2219\\
         40 & 8.76e-3 & 0.49 &1.20e-3&0.98& 0.2207\\
         80 &  6.27e-3 &   &6.07e-4& & 0.2200\\ \hline
\end{tabular}\label{table3}
\caption{Errors and experimental order of convergence for the nonlinear control problem in Section~\ref{nonlinear}. A reference solution of $Y_0=0.2194$ is computed with the same method on a fine grid with $N=160$ time points.}
\end{table}

\section*{Acknowledgments}
We very much thank Boualem Djehiche for his kindness in bringing some very useful references to our knowledge.
K.A. and C.W.O. acknowledge the funding of their research by the European Union, under the H2020-EU.1.3.1. MSCA-ITN-2018 scheme, Grant 813261.

\end{document}